\newtheorem{theorem}{Theorem}[section]
\newtheorem{lemma}[theorem]{Lemma}
\newtheorem{claim}{Claim}
\newtheorem{proposition}[theorem]{Proposition}
\newtheorem{remark}[theorem]{Remark}
\def\pgfsetinvisiblelayers#1{\def\pgf@layers@invisible{#1}}
\def\pgfshowalllayers{\let\pgf@layers@invisible=\pgfutil@empty}
\def\pgf@dolayer#1,#2,\relax{%
  \edef\pgf@marshal{\noexpand\pgfutil@in@{,#1,}{,\pgf@layers@invisible,}}%
  \pgf@marshal%
  \ifpgfutil@in@
  \else
    \def\pgf@test{#1}%
    \ifx\pgf@test\pgf@maintext%
      \box\pgf@layerbox@main%
    \else%
      \pgfsys@beginscope%
        \expandafter\box\csname pgf@layerbox@#1\endcsname%
      \pgfsys@endscope
    \fi%
  \fi
  \def\pgf@test{#2}%
  \ifx\pgf@test\pgfutil@empty%
  \else%
    \pgf@dolayer#2,\relax%
  \fi%
}
\tikzset{fit margins/.style={/tikz/afit/.cd,#1,
    /tikz/.cd,
    inner xsep=\pgfkeysvalueof{/tikz/afit/left}+\pgfkeysvalueof{/tikz/afit/right},
    inner ysep=\pgfkeysvalueof{/tikz/afit/top}+\pgfkeysvalueof{/tikz/afit/bottom},
    xshift=-\pgfkeysvalueof{/tikz/afit/left}+\pgfkeysvalueof{/tikz/afit/right},
    yshift=-\pgfkeysvalueof{/tikz/afit/bottom}+\pgfkeysvalueof{/tikz/afit/top}},
    afit/.cd,left/.initial=2pt,right/.initial=2pt,bottom/.initial=2pt,top/.initial=2pt}
\NewDocumentCommand {\getnodedimen} {O{\nodewidth} O{\nodeheight} m} {
	\begin{pgfinterruptboundingbox}
	\begin{scope}[local bounding box=bb@temp]
		\node[inner sep=0pt, fit=(#3)] {};
	\end{scope}
	\path ($(bb@temp.north east)-(bb@temp.south west)$);
	\end{pgfinterruptboundingbox}
	\pgfgetlastxy{#1}{#2}
}
\DeclareRobustCommand{\rvdots}{%
  \vbox{
    \baselineskip4\p@\lineskiplimit\z@
    \hbox{.}\hbox{.}\hbox{.}
  }}
\gdef\gsetlength#1#2{%
  \begingroup
    \setlength\skip@{#2}
    \global#1=\skip@    
  \endgroup             
}
\newcommand{\deflen}[2]{%
  \@ifundefined{#1}{\prefix@deflen{#1}{#2}}{\prefix@setlen{#1}{#2}}
}
\newcommand{\prefix@deflen}[2]{%
  \expandafter\newlength\csname #1\endcsname
  \expandafter\setlength\csname #1\endcsname{#2}%
}
\newcommand{\prefix@setlen}[2]{%
  \expandafter\setlength\csname #1\endcsname{#2}%
}
\newcommand{\gdeflen}[2]{%
  \@ifundefined{#1}{\prefix@gdeflen{#1}{#2}}{\prefix@gsetlen{#1}{#2}}
}
\newcommand{\prefix@gdeflen}[2]{%
  \expandafter\newlength\csname #1\endcsname
  \expandafter\gsetlength\csname #1\endcsname{#2}%
}
\newcommand{\prefix@gsetlen}[2]{%
  \expandafter\gsetlength\csname #1\endcsname{#2}%
}
\tikzset{%
    block/.style={
        rectangle,
        draw=black,
        thick,
        align=center,
        fit margins={left=\horizontalmargin,right=\horizontalmargin,bottom=\verticalmargin,top=\verticalmargin},
    },
    item/.style={
        circle,
        thick,
        align=center,
        minimum size=\itemSize,
        align=center,
    }
}
\newcommand*{\drawNodes}[1][]{%
    \begingroup
    \pgfkeys{/cbNodes, #1}%
    \newcommand*{\Nodes}[1]{\pgfkeysvalueof{/cbNodes/##1}}
    
    \foreach \j in {1,2}{
        \ifthenelse{\j=1}{%
            \def\s{v}
            \deflen{y}{0pt}
            \def\partLabel{delta}
            \def\partName{\Delta}
        }{%
            \def\s{u}
            \deflen{y}{-\partshift}
            \def\partLabel{deltaPrime}
            \def\partName{\Delta'}
        }
        \deflen{x}{0pt}
        \foreach \i in {1,...,\Nodes{nrFirst}}{
            \node [anchor=west, item] at (\x, \y) (\s_\i) {${\s}_{\i}$};
            \getnodedimen{\s_\i}
            \gsetlength{\x}{\dimexpr \x + \nodewidth + \itemshift \relax};
        }
        \node [anchor=west, item] at (\x, \y) (\s_dots) {$\cdots$};
        \getnodedimen{\s_dots}
        \gsetlength{\x}{\dimexpr \x + \nodewidth + \itemshift \relax};
        \foreach \i in {1,...,\Nodes{nrLast}}{
            \pgfmathtruncatemacro\k{\Nodes{nrLast} - \i}
            \ifthenelse{\k=0}{%
                \def\suf{n}
                \def\ind{n}
            }{%
                \def\suf{nMinus{\k}}
                \def\ind{n - \k}
            }
            \node [anchor=west, item] at (\x, \y) (\s_\suf) {${\s}_{\ind}$};
            \getnodedimen{\s_\suf}
            \gsetlength{\x}{\dimexpr \x + \nodewidth + \itemshift \relax};
        }
        \node (\partLabel) [block, fit = (\s_1) (\s_1) (\s_1) (\s_n)] {};
        \node [right=\partLabelShift of \partLabel] {$\partName$};
    }

    \endgroup
}
\DeclareMathOperator{\Sym}{Sym}
\DeclareMathOperator{\Alt}{Alt}
\DeclareMathOperator{\Aut}{Aut}
\DeclareMathOperator{\Diag}{Diag}
\DeclareMathOperator{\PSL}{PSL}
\DeclareMathOperator{\PGL}{PGL}
\DeclareMathOperator{\soc}{soc}
\newcommand{\N}{\mathbb{N}}
\begin{document}

\title{The distinguishing number of  complete bipartite and crown graphs}
\author{Lei Chen, Alice Devillers, Luke Morgan, Friedrich Rober}
\address{Lei Chen, Faculty of Mathematics, Bielefeld University, Bielefeld 33615, Germany.}
\address{Alice Devillers and Luke Morgan, The Centre for the Mathematics of Symmetry and Computation, The University of Western Australia, 35 Stirling Highway, Perth WA, 6009.}
\address{Friedrich Rober, Algebra and Representation Theory, RWTH Aachen University, Pontdriesch
10-16, 52062 Aachen, Germany.}

\thanks{\textbf{Acknowledgements}: We thank the Centre for the Mathematics of Symmetry and Computation at The University of
Western Australia for organising the annual research retreat of 2023 at which this research project
began. The research of Alice Devillers and of Luke Morgan was supported by the Australian Research
Council via the Discovery Project grants DP200100080 and DP230101268, respectively. The research of Lei Chen is supported by the Deutsch Forschungsgemeinschaft (DFG), German Research Foundation, Project-ID: 491392403-TRR 358. The research of Friedrich Rober is supported by the Deutscher Akademischer Austauschdienst (DAAD), German Academic Exchange Service, Project-ID: 57647563.
}

\date{\today}

\begin{abstract}
The distinguishing number of a permutation group $G\leqslant\Sym(\Omega)$ is the minimum number of colours needed to colour $\Omega$ in such a way that the only colour preserving element of $G$ is the identity. The distinguishing number of a graph is the distinguishing number of its automorphism group (as a permutation group on vertices).
We determine the distinguishing number of the complete bipartite graphs $K_{n,n}$ and the crown graphs $K_{n,n}-nK_2$, as well as the distinguishing number of some `large' subgroups of their automorphism groups, that is, the subgroups that are vertex- and edge-transitive and such that the induced action on each bipart is $\Alt(n)$ or $\Sym(n)$. 
We show that, if $G$ is a `large' group of automorphisms of $K_{n,n}$,  then $n-1\leqslant D(G) \leqslant n+1$. Similarly, if $G$ is a `large' group of automorphisms of a crown graph, then   $\lceil \sqrt{n-1}\rceil \leqslant D(G)\leqslant \lfloor \sqrt{n}\rfloor+1$.

\smallskip

\textit{Keywords:} complete bipartite graph; crown graph; distinguishing number; symmetric group; alternating group
\end{abstract}
\maketitle
\section{Introduction}
A vertex colouring of a graph \(\Gamma\) is a labelling of the vertices of \(\Gamma\). 
The \emph{distinguishing number} \(D(\Gamma)\) of a graph \(\Gamma\) is  the smallest integer \(k\) such that \(\Gamma\) has a colouring with \(k\) labels that is only preserved by the identity.  The investigation of distinguishing number has its roots in the  following problem: given a ring of seemingly identical keys,
how many colours are needed to colour the keys so that the position of each key on the ring is distinguishable? In mathematical terms, this becomes  a question about the distinguishing number of  cycles. The distinguishing number was generalised to the context of  group actions by Tymoczko \cite{Ty}: for a   group \(G\) acting on a set \(\Omega\), the distinguishing number \(D(G)\) of \(G\) is the smallest integer \(k\) such that there exists a partition \(\Pi=\{P_{1},\ldots, P_{k}\}\) of \(\Omega\) with \(G_{(\Pi)}=\bigcap_{i=1}^{k}G_{P_{i}}=1\). Note that the two definitions of distinguishing numbers \(D(\Gamma)\) and \(D(G)\) for a graph \(\Gamma\) and a group \(G\) acting on \(\Gamma\) coincide when \(G=\Aut(\Gamma)\).

The distinguishing number has been studied for many families of graphs, such as complete bipartite graphs, cycles, paths and trees (see \cite{CT2006} and citations thereof). Many group actions have been studied, such as the imprimitive actions \cite{P2006} and the action of the general linear group on vectors \cite{klavzar}. For primitive groups, Cameron, Neumann and Saxl \cite{CNS1994} proved that all but   finitely many groups have distinguishing number two.  Groups with certain restrictions on stabilisers or motion were considered by Conder and Tucker \cite{ConderTucker} who generalised the `motion lemma' of Russell and Sundaram \cite{RussellSundaram} to show that many graphs and maps have distinguishing number two. In fact, Conder and Tucker state that ``having distinguishing number two is a generic property''. From the point of view of global symmetry, the distinguishing number of non-bipartite $2$-arc-transitive graphs was considered in \cite{DMH2018} where it was shown that (aside from complete graphs) all but finitely many graphs have distinguishing number two. As a continuation of that project we focus on families of graphs that have large automorphism groups and unbounded distinguishing number.

In this paper, we focus on two important families of bipartite graphs, the complete bipartite graphs \(K_{n,n}\) which have automorphism group $\Sym(n)\wr \Sym(2)$ and the so-called `crown graphs' \(K_{n,n}-nK_{2}\) (graphs obtained by deleting a perfect matching from a complete bipartite graph) which have automorphism group $\Sym(n)\times \Sym(2)$.

A closely related concept to distinguishing number is that of  the \emph{distinguishing chromatic number \(\chi_{D}(\Gamma)\)} of \(\Gamma\), defined to be the smallest integer \(k\) such that there exists a vertex-colouring of $\Gamma$ with \(k\) colours  that  is both distinguishing and proper (adjacent vertices must have different colours).

It was previously shown that  \(D(K_{n,n})=n+1\) \cite[Theorem 2.4]{CT2006} and  \(\chi_{D}(K_{n,n})=2n\) \cite[Theorem 2.3]{CT2006}, and in \cite{P2009} it was proved that \(\chi_{D}(K_{n,n}-nK_{2})=\lceil 2\sqrt{n}\rceil\). We complete the results on the distinguishing number of the crown graphs by proving:

\begin{theorem}
\label{intro thm: dno of crown}
    Let $\Gamma=K_{n,n}-nK_2$. Then $D(\Gamma)=\lfloor \sqrt{n}\rfloor +1$
\end{theorem}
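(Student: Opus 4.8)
The plan is to encode colourings by the ``column types'' of the two biparts and reduce the whole problem to a clean counting statement about subsets of a $k\times k$ grid. Write the biparts as $\{v_1,\dots,v_n\}$ and $\{u_1,\dots,u_n\}$ with the deleted matching $\{v_iu_i\}$, so that $v_i\sim u_j$ iff $i\neq j$, and use the description $\Aut(\Gamma)=\Sym(n)\times\Sym(2)$, in which $\Sym(n)$ acts diagonally by $v_i\mapsto v_{\sigma(i)}$, $u_i\mapsto u_{\sigma(i)}$ and the generator $\tau$ of $\Sym(2)$ swaps $v_i\leftrightarrow u_i$. Given a colouring $c$ with $k$ colours, I would attach to each index $i$ its \emph{type} $t_i=(c(v_i),c(u_i))\in\{1,\dots,k\}^2$, set $T=\{t_1,\dots,t_n\}$, and let $s$ be the coordinate-swapping involution $(a,b)\mapsto(b,a)$.

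The first step is to characterise exactly the colour-preserving automorphisms. I would show, first, that $(\sigma,1)$ preserves $c$ iff $\sigma$ permutes each type-class, so that the identity is the only such element precisely when the $t_i$ are pairwise distinct; and second, that $(\sigma,\tau)$ preserves $c$ iff $t_{\sigma(i)}=s(t_i)$ for all $i$, and that—once the $t_i$ are distinct—such a $\sigma$ exists iff $s(T)=T$ (given $s(T)=T$ one defines $\sigma(i)$ to be the unique index of type $s(t_i)$ and checks this is a colour-preserving involution-type element, while if some $s(t_i)\notin T$ no valid $\sigma$ can exist). Combining the two, a colouring is distinguishing iff the $t_i$ are pairwise distinct \emph{and} $s(T)\neq T$.

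The second step solves the resulting combinatorial problem: the least $k$ for which $\{1,\dots,k\}^2$ admits an $n$-element subset $T$ with $s(T)\neq T$. The map $s$ fixes the $k$ diagonal points and pairs up the remaining $k^2-k$ off-diagonal points, so $T=\{1,\dots,k\}^2$ is forced to be $s$-invariant; hence a non-$s$-invariant $T$ must omit at least one point, giving $n\le k^2-1$. Conversely, when $k\ge 2$ and $n\le k^2-1$ one builds such a $T$ by taking an off-diagonal point, discarding its mirror, and adjoining any further $n-1$ points other than that mirror. Thus a distinguishing $k$-colouring exists iff $n\le k^2-1$ (with $k\ge2$), and writing $k_0=\lfloor\sqrt n\rfloor$ one has $k_0^2\le n$, so $n>k_0^2-1$ and $k_0$ colours fail, whereas $(k_0+1)^2>n$ gives $n\le(k_0+1)^2-1$, so $k_0+1$ colours suffice; hence $D(\Gamma)=\lfloor\sqrt n\rfloor+1$.

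The main obstacle is the second half of the characterisation: pinning down precisely when a part-swapping automorphism $(\sigma,\tau)$ can fix the colouring, including the degenerate symmetric types $t_i=(a,a)$, for which $\sigma$ may fix $i$ while $(\sigma,\tau)$ is still non-trivial. The crux of the \emph{lower} bound is exactly the boundary/perfect-square regime $n=k^2$, where $T$ is forced to be the whole grid and is therefore $s$-invariant, so that a non-trivial swapping symmetry always survives. I would also check separately the small values of $n$ for which $\Aut(\Gamma)$ is strictly larger than $\Sym(n)\times\Sym(2)$, since the entire reduction rests on that description of the automorphism group.
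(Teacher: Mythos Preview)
Your argument is correct and, at the level of the key ideas, overlaps substantially with the paper's, but the packaging is genuinely different and somewhat cleaner.

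The paper proves the result as Proposition~\ref{prop:DGdiag-Sym}. Its lower bound is your ``distinct types'' observation in disguise: Claim~\ref{u_idiffparts} says precisely that two indices cannot share a type, giving $n\le k^2$; the extra unit in the perfect-square case is then obtained by explicitly building a part-swapping automorphism $(h,h)\tau$ from a bijection $\alpha\colon\{1,\dots,\ell^2\}\to\{1,\dots,\ell\}^2$, which is exactly your ``$T$ equals the whole grid, hence $s(T)=T$'' argument. The main difference is the upper bound: the paper does not isolate the iff criterion ``types distinct and $s(T)\neq T$'', and instead constructs explicit distinguishing partitions in three separate regimes (two subcases for non-square $n$ and one for square $n$) using the block families $\mathcal V_n(i,q)$ and $\mathcal U_n(i,q)$ together with Lemma~\ref{lem:partsaresubsets}. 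Your formulation replaces all of that casework by the one-line existence argument ``pick an off-diagonal point, omit its mirror, fill up arbitrarily''. So your route buys brevity and a clean structural characterisation of distinguishing colourings; the paper's buys concrete colourings one can write down, and its machinery is reused verbatim for the $\Alt(n)$ case in Proposition~\ref{prop:DGdiag-Alt}.

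Your caveat about small $n$ is well placed. The identification $\Aut(\Gamma)\cong\Sym(n)\times\Sym(2)$ fails for $n\le 2$: for $n=2$ the crown graph is $2K_2$, whose automorphism group is $\Sym(2)\wr\Sym(2)$ of order $8$, and one checks directly that $D(2K_2)=3\neq\lfloor\sqrt 2\rfloor+1$. The paper's Proposition~\ref{prop:DGdiag-Sym} accordingly assumes $n\ge 3$, which is also where the graph becomes connected; you should state the same hypothesis explicitly.
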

In  comparison with the results in \cite[Theorem 1]{P2009}, we observe that \(\chi_{D}(\Gamma) \approx  2D(\Gamma)\) for \(\Gamma=K_{n,n}-nK_{2}\), which presents a similar pattern to that of \(\Gamma=K_{n,n}\).

The above theorem is a consequence of a more detailed study on the value of \(D(G)\) where \(G\leqslant \Aut(\Gamma)\) and \(\Gamma=K_{n,n}\) or \(K_{n,n}-nK_{2}\). The groups $G$ we consider can be viewed as the `large' subgroups of $\Aut(\Gamma)$. In the theorem below, we  denote by $G^+$ the unique index two subgroup of $G$ that preserves the bipartition of $\Gamma$.
\begin{theorem}
\label{intro thm: table values}
      Let $\Gamma=K_{n,n}$ or $K_{n,n}-nK_2$  and let $G  \leqslant \Aut(\Gamma)$ be such that $G^+$ appears in Table~\ref{tab:main table}. Then the value of  $D(G)$ is given in the corresponding column of Table~\ref{tab:main table}.
\end{theorem}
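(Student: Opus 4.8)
The plan is to translate the condition $G_{(\Pi)}=1$ into a purely combinatorial statement about colourings and then optimise the number of colours row by row. Write the two biparts as $\Delta=\{v_1,\dots,v_n\}$ and $\Delta'=\{u_1,\dots,u_n\}$, set $[k]=\{1,\dots,k\}$, and encode a $k$-colouring $c$ by the words $a_i=c(v_i)$, $b_i=c(u_i)$ together with the ``column'' $f(i)=(a_i,b_i)\in[k]^2$. Every element of $G$ either preserves the bipartition (lies in $G^+$) or interchanges $\Delta$ and $\Delta'$; in the first case it acts on indices as a pair $(g,h)$ (with $g=h$ in the crown case, where $G^+$ is diagonal), and in the second as such a pair composed with $\sigma\colon v_i\leftrightarrow u_i$.

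First I would record the elementary observation that a bipartition-preserving element $(g,h)$ fixes the colour classes of $c$ iff $a_{g(i)}=a_i$ and $b_{h(i)}=b_i$ for all $i$, while a swapping element sending $v_i\mapsto u_{g(i)}$, $u_i\mapsto v_{h(i)}$ fixes them iff $b_{g(i)}=a_i$ and $a_{h(i)}=b_i$ for all $i$. Specialising the first identity: for the independent product actions on $K_{n,n}$ the $G^+$-stabiliser is trivial exactly when $c|_\Delta$ and $c|_{\Delta'}$ are each injective (for $\Sym(n)$), or each injective up to one repeated value (for $\Alt(n)$, since $\bigl(\prod\Sym(C_i)\bigr)\cap\Alt(n)=1$ iff no colour class has size $\ge 3$ and at most one has size $2$). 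For the diagonal rows (all crown cases and the twisted-diagonal $K_{n,n}$ rows) the relevant object is $f$: the diagonal $\Sym(n)$-stabiliser is trivial iff $f$ is injective, and the diagonal $\Alt(n)$-stabiliser is trivial iff $f$ is injective up to one repeated column. This already fixes the main term of each entry: injectivity of a map into $[k]$ needs $k\ge n$, injectivity into $[k]^2$ needs $k\ge\lceil\sqrt n\rceil$, and the $\Alt(n)$ slack lowers the target by one (to $n-1$ and to $\lceil\sqrt{n-1}\rceil$ respectively).

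Next I would impose the swap condition. For the product rows on $K_{n,n}$, killing every swapping element amounts to ensuring the colour sets $\{a_i\}$ and $\{b_i\}$ cannot be matched colour-to-colour, which for injective colourings means making these sets unequal; this is the source of the $+1$ taking $n$ up to $n+1$. For the diagonal rows (including the crown graph) the swap condition is that no $g$ satisfies $f(g(i))=\overline{f(i)}$, where $\overline{(x,y)}=(y,x)$; when $f$ is injective this holds iff the image $S=f([n])\subseteq[k]^2$ is \emph{not} invariant under coordinate-swap. A short lemma then shows that an asymmetric $n$-subset of $[k]^2$ exists precisely when $n\le k^2-1$, giving the threshold $k=\lceil\sqrt{n+1}\rceil=\lfloor\sqrt n\rfloor+1$; the sole obstruction is at $n=k^2$, where $[k]^2$ is the unique, hence symmetric, $n$-subset. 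The Goursat rows, where $G^+=\{(g,h):\operatorname{sgn}(g)=\operatorname{sgn}(h)\}$ or a twisted diagonal, need the same analysis together with parity bookkeeping: one must check the parity of the permutation $g$ realising a putative symmetry, which can shift a threshold by one and distinguishes the columns of the table.

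I would then assemble the upper bounds by exhibiting, at the threshold value of $k$ in each row, an explicit colouring meeting all the combinatorial conditions (for instance, for the crown graph with $k=\lfloor\sqrt n\rfloor+1$, take $n$ distinct columns of $[k]^2$ including exactly one element of some swap-pair), and the matching lower bounds by a pigeonhole argument showing that with $k-1$ colours either the injectivity (or near-injectivity) fails or the relevant set is forced to be symmetric, so some nonidentity element of $G$ survives. The hard part will be the lower bound in the delicate boundary cases: proving that \emph{every} colouring with one fewer colour admits a symmetry requires defeating the swap and the bipartition-preserving part simultaneously, and this is subtlest for the $\Alt(n)$ and Goursat rows, where the one-repeated-value slack and the parity constraint interact and must be traded off to decide whether the value is $\lceil\sqrt{n-1}\rceil$, $\lceil\sqrt n\rceil$, or $\lfloor\sqrt n\rfloor+1$ (respectively $n-1$, $n$, or $n+1$ for $K_{n,n}$).
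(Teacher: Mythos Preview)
Your encoding via the column map \(f(i)=(a_i,b_i)\) and the translation of \(G_{(\Pi)}=1\) into injectivity/near-injectivity plus an asymmetry condition on the image is exactly the mechanism underlying the paper's proofs, and your thresholds for the first five rows are correct. The paper proceeds case by case (Propositions~\ref{prop:notfaithful}, \ref{prop:DGdiag-Sym}, \ref{prop:DGdiag-Alt}) with explicit partitions built from the blocks \(\mathcal V_n(i,q)\) and \(\mathcal U_n(i,q)\), and kills the swapping elements not by parity but by the cruder device of forcing \(|P_i\cap\Delta|\neq|P_i\cap\Delta'|\) for some part; your abstract formulation (``choose an asymmetric \(n\)-subset of \([k]^2\)'') is a cleaner way to say the same thing, and in the \(\Alt(n)\) rows it would let you bypass some of the paper's case splits once you check that the size-asymmetry trick still leaves enough room for the single repeated column.

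Two genuine omissions, however. First, your sketch does not touch the last row of Table~\ref{tab:main table}, where \(G^+=\langle V_4\times V_4,\Diag(H\times H)\rangle\) on \(K_{4,4}\). This group is neither a product nor a (twisted) diagonal: an element \((g,h)\in G^+\) is determined by \(g\) only up to a coset of \(V_4\), so your column map \(f\) no longer captures the \(G^+\)-stabiliser, and the argument has to be done by hand (as in Proposition~\ref{prop:n=4}). Second, for the \(K_{6,6}\) twisted-diagonal row the automorphism \(\varphi\) is \emph{outer}, so ``\(f(h(i))=\overline{f(i)}\)'' is not the right swap condition: the relation between the two coordinates of \(f\) under \(G^+\) is via \(\varphi\), not the identity, and the point-stabiliser \(H_{v_i}^\varphi\) is transitive on \(\Delta'\). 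Your framework can be adapted (replace \(f\) by \((a_i,b_{i^{\psi}})\) for a suitable bijection \(\psi\)), but the paper simply dispatches this finite case by a \textsc{Magma} computation (Proposition~\ref{prop:DGdiag-outer}). Apart from these two sporadic rows, and the deferred parity bookkeeping you already flag for the \(\Alt(n)\) and Goursat cases, your plan matches the paper's argument.
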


In an investigation into the distinguishing number of $2$-arc-transitive graphs \cite{CDMR}, the groups and graphs considered in Theorem~\ref{intro thm: table values} have turned out to be rather special. Our interest in them is justified by the following classification theorem. We require one further definition. Let $\Gamma$ be a bipartite graph with vertex-transitive group of automorphisms $G\leqslant \Aut(\Gamma)$, and let $\Delta$ and $\Delta'$ denote the two parts of $V\Gamma$. Letting $n$ be the size of $\Delta$ (which equals the size of $\Delta'$), $G^+$ induces a permutation group on   $\Delta$, denoted by $(G^+)^{\Delta}$, which may be viewed as a subgroup of $\Sym(n)$.

\begin{theorem}
\label{main}
      Let $\Gamma$ be a connected bipartite $G$-vertex transitive and $G$-edge transitive graph where $G\leqslant \Aut(\Gamma)$. Let $\Delta$  denote one of the two parts of $\Gamma$, of size $n$ say, and suppose that
     $\Alt(n)\leqslant (G^+)^\Delta\leqslant \Sym(n)$.  Then $\Gamma=K_{n,n}$ or $K_{n,n}-nK_2$ and, up to conjugation in  $\Sym(n)\times \Sym(n)$, $G^+$ appears in Table~\ref{tab:main table}.
\end{theorem}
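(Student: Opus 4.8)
The plan is to classify the possible groups $G^+$ via the structure of a bipartite $G$-vertex-transitive, $G$-edge-transitive graph whose induced action on a part contains $\Alt(n)$. First I would observe that, since $G$ is edge-transitive and $\Gamma$ is bipartite and connected, $G^+$ (the subgroup preserving the bipartition) acts transitively on the edges of $\Gamma$; and since $G$ is vertex-transitive but $\Gamma$ is bipartite, $G$ must swap the two parts $\Delta,\Delta'$, so $G^+$ has index two in $G$ and $G^+$ preserves each part. The key object is the permutation group $H:=(G^+)^\Delta\leqslant\Sym(n)$, which by hypothesis satisfies $\Alt(n)\leqslant H\leqslant\Sym(n)$, hence $H=\Alt(n)$ or $\Sym(n)$. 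By symmetry of the two parts (using an element of $G\setminus G^+$) the induced action $(G^+)^{\Delta'}$ is permutationally isomorphic to $H$.

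Next I would analyse $G^+$ as a subgroup of the direct product $(G^+)^\Delta\times(G^+)^{\Delta'}\leqslant\Sym(n)\times\Sym(n)$, using the fact that $G^+$ is a subdirect product (its projections onto each factor are surjective onto $H$). By Goursat's lemma, a subdirect subgroup of $H\times H$ corresponds to an isomorphism between a quotient of the first copy of $H$ and a quotient of the second; since $H$ is $\Alt(n)$ or $\Sym(n)$, its normal subgroup lattice is very restricted (for $n\geqslant 5$ the only proper nontrivial normal subgroup of $\Sym(n)$ is $\Alt(n)$, and $\Alt(n)$ is simple). This forces $G^+$ to be one of: the full direct product $H\times H$, the `diagonal' type subgroup $\{(h,\varphi(h)):h\in H\}$ for an isomorphism $\varphi$, or (when $H=\Sym(n)$) the index-two subgroup $\{(g,h):g,h\in\Sym(n),\ \mathrm{sgn}(g)=\mathrm{sgn}(h)\}$ sitting between $\Alt(n)\times\Alt(n)$ and $\Sym(n)\times\Sym(n)$. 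Each isomorphism $\varphi$ is, for $n\neq 6$, inner, so up to conjugation in $\Sym(n)\times\Sym(n)$ the diagonal subgroups collapse to the standard diagonal; the exceptional outer automorphism at $n=6$ would need to be handled as a separate case or shown to be conjugate to an entry of the table.

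Having pinned down the abstract possibilities for $G^+$, I would then determine which of these actually arise as edge-transitive automorphism groups of a bipartite graph, and identify the graph. The edge set of $\Gamma$ is a $G^+$-invariant subset of $\Delta\times\Delta'$, i.e.\ a union of $G^+$-orbits on $\Delta\times\Delta'$. For the full product $H\times H$ the action on $\Delta\times\Delta'$ is transitive, so the only edge-transitive graph is $K_{n,n}$. For the diagonal subgroup, the orbits on $\Delta\times\Delta'$ under $\{(h,h):h\in H\}$ are exactly the diagonal $\{(x,x)\}$ and its complement (the orbitals of the natural $\Alt(n)$ or $\Sym(n)$ action being precisely these two for the diagonal action on $\Delta\times\Delta'\cong[n]\times[n]$), so the edge-transitive graphs are either the perfect matching $nK_2$ (disconnected, excluded by the connectedness hypothesis) or its bipartite complement $K_{n,n}-nK_2$, the crown graph. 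The intermediate sign-parity subgroup yields the same orbital structure on $\Delta\times\Delta'$ as $\Sym(n)\times\Sym(n)$, hence again $K_{n,n}$. Collecting these cases gives $\Gamma=K_{n,n}$ or $K_{n,n}-nK_2$ together with the list of admissible $G^+$ recorded in Table~\ref{tab:main table}.

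The main obstacle I anticipate is the careful bookkeeping of the Goursat correspondence together with small-$n$ and exceptional-automorphism issues: the claimed normal-subgroup structure of $\Sym(n)$ and $\Alt(n)$ holds only for $n\geqslant 5$ (the cases $n\leqslant 4$, where $\Alt(n)$ need not be simple and the hypothesis $\Alt(n)\leqslant H$ is weak, must be treated or excluded separately), and the outer automorphism of $\Sym(6)$ must be reconciled with `up to conjugation in $\Sym(n)\times\Sym(n)$', since an outer diagonal is \emph{not} conjugate to the standard diagonal inside the product but may nonetheless give a graph isomorphic to an entry of the table. A secondary point requiring care is verifying edge-transitivity (not merely invariance) for each candidate: one must check that $G^+$ acts transitively on the chosen orbital, and that adding the part-swapping coset of $G$ in $\Gamma=K_{n,n}-nK_2$ is consistent, i.e.\ that an element of $G\setminus G^+$ can be chosen to preserve the non-edge matching, which is automatic for the crown graph.
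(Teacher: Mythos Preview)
Your plan via Goursat's lemma is essentially the paper's approach: the paper phrases it in terms of the kernels $K_1,K_2$ of the actions of $G^+$ on the two parts, but these are precisely the normal subgroups appearing in the Goursat correspondence, and the case split $K_1\neq 1$ versus $K_1=1$ matches your split between the full/intermediate product and the diagonal.

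Two points need sharpening. First, your orbital analysis for the diagonal case is stated only for the \emph{standard} diagonal $\{(h,h)\}$, and there you are right that the orbits on $\Delta\times\Delta'$ are the matching and its complement, forcing the crown graph. For the \emph{outer} diagonal $\Diag_\varphi$ at $n=6$ this fails: the stabiliser in $H$ of a point of $\Delta$ is mapped by $\varphi$ to a copy of $\PSL_2(5)$ or $\PGL_2(5)$ acting on $\Delta'$, and that action is transitive, so there is a single orbital and $\Gamma=K_{6,6}$, not a crown graph. This is why $\Diag_\varphi(H\times H)$ appears in the table as a separate $K_{6,6}$ row rather than something to be ``reconciled'' with the standard diagonal.

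Second, your remark that $n\leqslant 4$ must be handled separately is correct but understates what happens. At $n=4$ the group $\Alt(4)$ is not simple (it has $V_4$ normal), so Goursat yields an additional family of subdirect products with kernel $V_4$ on each side, namely $\langle V_4\times V_4,\Diag(H\times H)\rangle$ for $H\in\{\Alt(4),\Sym(4)\}$. Since $V_4$ is transitive on four points these force $\Gamma=K_{4,4}$, and they occupy their own rows in the table. Your sketch does not anticipate them.
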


\begin{table}[h]
\centering
\begin{tabular}{|c|c|c|}
\hline
 $\Gamma$ &  $G^+$  & $D(G)$ \\  
\hline 
 $K_{n,n}$ & $\Alt(n)\times \Alt(n)$ &$n-1$ \\
 $K_{n,n}$ &$\langle \Alt(n)\times \Alt(n),((1,2),(1,2))\rangle$ & $n$ \\
 $K_{n,n}$ &$\Sym(n)\times \Sym(n)$ & $n+1$ \\
\hline
 $K_{n,n}-nK_2$ &$\Diag(\Sym(n)\times \Sym(n))$ & $\lfloor\sqrt{n} \rfloor+1$ \\
 $K_{n,n}-nK_2$ &$\Diag(\Alt(n)\times \Alt(n))$ & $\lceil\sqrt{n-1} \rceil$ \\
\hline
 $K_{6,6}$ & $\Diag_\varphi(H\times H)$ &$3$ \\
\hline
$K_{4,4}$ &$\langle V_4\times V_4,\Diag(H\times H)\rangle$ &  $3$ \\
\hline
\end{tabular}
\caption{Values of $D(G)$ where  $G \leqslant \Aut(\Gamma)$,   $H=\Alt(n)$ or $\Sym(n)$ and $\varphi$ is a certain outer automorphism of $\Sym(6)$ specified in Proposition~\ref{group-cases}.}
\label{tab:main table}
\end{table}

We remark that the subgroup $G^+$ listed in Table~\ref{tab:main table} does not \emph{uniquely} determine $G$, but all possibilities are listed in Proposition~\ref{group-cases}. Further, the definition of a `diagonal group' such as $\Diag_\varphi(H\times H)$ is given in Section~\ref{sec:prelims}.
 
Our results will be applied in forthcoming work in which we investigate the distinguishing number of $2$-arc-transitive graphs \cite{CDMR}. As a result of that investigation and \cite[Corollary 5]{DMH2018}, the graph-group pairs listed in Table~\ref{tab:main table} together with the complete graphs $K_n$ are the only $2$-arc-transitive   graphs with distinguishing number \emph{not} bounded by an absolute constant. 


\section{Preliminaries}
\label{sec:prelims}

Let $\Gamma$ be a connected bipartite $G$-vertex transitive graph where $G\leqslant \Aut(\Gamma)$.
Let $\Delta$ and $\Delta'$ denote the parts in the (unique) partition of $V\Gamma$ into two parts.
Since $\Gamma$ is $G$-vertex transitive, we have $|\Delta|=|\Delta'|=n$.

We label the vertices in \(\Delta\) by \(\{v_1,\ldots,v_n\}\),
and  the vertices in \(\Delta'\) by \(\{u_1,\ldots,u_n\}\).
Let $\tau$ be the  involution swapping $\Delta$ and $\Delta'$ defined by
$$
v_i^\tau=u_i
\qquad \text{ and } \qquad
u_i^\tau=v_i.
$$
Then
$
\Aut(\Gamma)
\leqslant
\bigl(\Sym(\Delta)\times \Sym(\Delta')\bigr) \rtimes \langle \tau \rangle
\cong
\Sym(n) \wr \Sym(2).
$

We write an element in $\Sym(\Delta)\times \Sym(\Delta')$ in the form $(g,g')$, where $g,g'\in \Sym(n)$ and  $(g,g')$ acts on $V\Gamma$ as follows:
$$
v_i^{(g,g')}=v_{i^g}
\qquad \text{ and } \qquad
u_i^{(g,g')}=u_{i^{g'}}
$$

Let $G^+=G_\Delta=G_{\Delta'}$, and let  $U_1=(G^+)^\Delta$, $U_2=(G^+)^{\Delta'}$, so that $G^+\leqslant U_1\times U_2$.  Then $ U_1\cong U_2$ and we identify both groups with a subgroup $H$ of $\Sym(n)$. 

For $\varphi\in\Aut(H)$, we define 
$\Diag_\varphi(H\times H)$ as the subgroup $\{(h,h^\varphi) \mid h\in H\}$ of $H\times H$.
When $\varphi$ is the trivial automorphism, we simply write $\Diag(H\times H)$.



\begin{lemma}\label{lem:diag_rep_enough}
Let $H \leqslant \Sym(n)$ and $H^*\leqslant \Aut(H)$  denote the group of automorphisms of $H$ induced by $N_{\Sym(n)}(H)$, that is $H^*=N_{\Sym(n)}(H)/C_{\Sym(n)}(H)$.
Let $\mu, \varphi \in \Aut(H)$. If $\mu H^*=\varphi H^*$, then $\Diag_\mu(H\times H)$ and $\Diag_\varphi(H\times H)$ are conjugate in $\Sym(n)\times \Sym(n)$.  
\end{lemma}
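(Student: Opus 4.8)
The plan is to exhibit an explicit element of $\Sym(n)\times\Sym(n)$ conjugating one diagonal subgroup onto the other; the whole statement should then reduce to a single coset manipulation together with a direct check against the action convention fixed in the preliminaries.

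First I would reinterpret $H^*$ concretely. By definition $H^*$ is the image of the conjugation homomorphism $N_{\Sym(n)}(H)\to\Aut(H)$, so an automorphism $\alpha$ lies in $H^*$ precisely when there is some $b\in N_{\Sym(n)}(H)$ with $h^\alpha=b^{-1}hb$ for all $h\in H$; write $\alpha_b$ for this conjugation automorphism. Next I would unwind the hypothesis: $\mu H^*=\varphi H^*$ is equivalent to $\mu^{-1}\varphi\in H^*$, so I may fix $b\in N_{\Sym(n)}(H)$ with $\alpha_b=\mu^{-1}\varphi$, equivalently $\mu\alpha_b=\varphi$ in $\Aut(H)$.

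The candidate conjugator is then $(\id,b)\in\Sym(n)\times\Sym(n)$. I would compute the conjugate of a typical element $(h,h^\mu)$ of $\Diag_\mu(H\times H)$ using the componentwise group operation and the right-action convention $h^{\sigma\tau}=(h^\sigma)^\tau$: since the first coordinate is fixed by $\id$ and the second is conjugated by $b$, one gets $(h,h^\mu)^{(\id,b)}=(h,(h^\mu)^b)=(h,h^{\mu\alpha_b})$. Because $\mu\alpha_b=\varphi$, this equals $(h,h^\varphi)$, and as $h$ ranges over $H$ the first coordinate ranges over all of $H$, so the image is exactly $\Diag_\varphi(H\times H)$. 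This gives $\Diag_\mu(H\times H)^{(\id,b)}=\Diag_\varphi(H\times H)$, as required.

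I do not expect a deep obstacle: the real content is simply that $H^*$ is realised by honest conjugations inside $N_{\Sym(n)}(H)$, so the membership $\mu^{-1}\varphi\in H^*$ can be promoted to an actual permutation $b$ acting on the second part only. The main care needed is bookkeeping: getting the side of the coset right (so that the relevant automorphism is $\mu^{-1}\varphi$ rather than $\varphi\mu^{-1}$) and matching the composition order of automorphisms to the right-action convention, so that $(h^\mu)^b$ is indeed $h^{\mu\alpha_b}$ and not $h^{\alpha_b\mu}$. Choosing the conjugator to be trivial on the first coordinate keeps the diagonal shape manifest and removes any need to renormalise the first component.
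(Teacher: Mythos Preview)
Your argument is correct and matches the paper's proof essentially line for line: both pick the permutation in $N_{\Sym(n)}(H)$ realising the difference automorphism and conjugate by it in the second coordinate only. Your conjugator $(\id,b)$ with $\alpha_b=\mu^{-1}\varphi$ is exactly the paper's $(1,t^{-1})$ with $c_t=\varphi^{-1}\mu$, just parametrised from the other side.
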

\begin{proof}
    Suppose $\mu = \varphi c_t$ for some \(c_t\in H^*\), that is,  there exists \(t\in N_{\Sym(b)}(H)\) such that \(h^{c_t}=t^{-1}ht\) for all \(h\in H\). Then $(h,h^\mu) = (h,h^{\varphi c_t})=(h,(h^\varphi)^t)$. Since $(1,t^{-1}) \in \Sym(b)\times\Sym(b)$ we have 
\begin{eqnarray*}
    \Diag_\mu(H\times H)^{(1,t^{-1})} &= & \{ (h,h^\mu)^{(1,t^{-1})} : h\in H \} \\
    &=& \{ (h,(h^\varphi)^t)^{(1,t^{-1})} : h\in H \}\\
    &=& \{ (h,h^\varphi) : h\in H \}\\
    &=& \Diag_\varphi(H\times H) 
\end{eqnarray*}
as required.
\end{proof}

Thus, if we fix a transversal $\mathcal T = \{\varphi_1,\ldots,\varphi_n\}$ to $H^*$ in $\Aut(H)$, then for any $\varphi \in \Aut(H)$, we know $ \Diag_\varphi(H\times H)$ is conjugate to  $ \Diag_{\varphi_i}(H\times H)$ for some $i\in \{1\,\ldots,n\}$.

\medskip
In this paper we are interested in the case where $\Alt(n)\leqslant H\leqslant \Sym(n)$.

\begin{proposition}\label{group-cases}
    Let $\Gamma$ be a connected bipartite $G$-vertex transitive and $G$-edge transitive graph where $G\leqslant \Aut(\Gamma)$. Let $\Delta$ and $\Delta'$ denote the parts, each of size $n$, and let $\tau$ be an involution swapping $\Delta$ and $\Delta'$. Let $H$ be the subgroup of $\Sym(n)$ isomorphic to  $(G^+)^\Delta$.
    If $\Alt(n)\leqslant H\leqslant \Sym(n)$ then, up to conjugation by an element in  $\Sym(n)\times \Sym(n)$, one of the following holds:
    \begin{table}[ht]
        \centering
        \begin{tabular}{c|ccc}
         Case&\(\Gamma\)&\(G^+\)&\(G\)  \\
         \hline
         (a) &\(K_{n,n}\)&\(\Sym(n) \times \Sym(n)\)&\(\Sym(n)\wr\Sym(2)\)\\
         \hline
         (b)&\(K_{n,n}\)&\(\Alt(n)\times \Alt(n)\)&\(\Alt(n)\wr\Sym(2)\)\\
         \hline
         (c)&\(K_{n,n}\)&\(\bigl\langle\, \Alt(n)\times \Alt(n),~ \bigl((1,2),(1,2)\bigr) \,\bigr\rangle\)&\(G^+\rtimes \langle\, \tau \,\rangle\)\\
         &&&\(G^+\rtimes \bigl\langle\, \bigl( (1,2), \mathrm{id} \bigr) \,\tau \,\bigr\rangle\)\\
         \hline
         (d)&\(K_{n,n}-nK_2\)&\(\Diag(\Sym(n)\times\Sym(n))\)&\(G^+\rtimes\langle\tau\rangle\)\\
         \hline
         (e)&\(K_{n,n}-nK_2\)&\(\Diag(\Alt(n)\times\Alt(n))\)&\(G^+\rtimes\langle\tau\rangle\)\\
         &&&\(G^+\rtimes \langle ((1,2),(1,2))\tau\rangle\)\\
         \hline
         (f)&\(K_{6,6}\)&\(\Diag_\varphi(\Sym(6)\times\Sym(6))\)&\(G^+\rtimes\langle \tau\rangle\)\\
         \hline
         (g)&\(K_{6,6}\)&\(\Diag_\varphi(\Alt(6)\times\Alt(6))\)&\(G^+\rtimes\langle \tau\rangle\)\\
         &&&\(G^+\rtimes\langle((1,2),(1,2)^\varphi) \tau\rangle\)\\
         \hline
         (h)&\(K_{4,4}\)&\(\langle V_4\times V_4,\Diag(\Sym(4)\times\Sym(4))\rangle\)&\(G^+\rtimes\langle\tau\rangle\)\\
         \hline
         (i)&\(K_{4,4}\)&\(\langle V_4\times V_4,\Diag(\Alt(4)\times\Alt(4))\rangle\)&\(G^+\rtimes\langle\tau\rangle\)\\
         &&&\(G^+\rtimes \langle ((1,2),(1,2))\tau\rangle\)\\
         
        \end{tabular}
        \caption{\((\Gamma,G^+,G)\) for connected bipartite \(G\)-vertex transitive and \(G\)-edge transitive graph where \(\Alt(n)\leqslant H:=(G^+)^\Delta\leqslant\Sym(n).\) }
        \label{tab:placeholder}
    \end{table}
    
In cases (f) and (g), $\varphi$ is a chosen outer automorphism of $\Sym(6)$ of order $2$.
Moreover, $H=\Sym(n)$ in cases (a), (c), (d), (f), and $H=\Alt(n)$ in cases (b), (e), (g).

\end{proposition}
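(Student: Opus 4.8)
The plan is to classify $G^+$ as a subdirect product of $H\times H$ and then recover $\Gamma$ and $G$ from it. Recall from the preliminaries that $G^+\leqslant U_1\times U_2$ with $U_1=(G^+)^\Delta$ and $U_2=(G^+)^{\Delta'}$ both identified with $H$; since the $U_i$ are by construction the coordinate projections, $G^+$ is a subdirect product of $H\times H$. First I would invoke Goursat's lemma: there exist normal subgroups $N_1,N_2\trianglelefteq H$ and an isomorphism $\theta\colon H/N_1\to H/N_2$ with
\[
G^+=\{(a,b)\in H\times H : \theta(aN_1)=bN_2\},
\]
so in particular $H/N_1\cong H/N_2$. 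Classifying $G^+$ up to conjugacy in $\Sym(n)\times\Sym(n)$ thus reduces to enumerating the data $(N_1,N_2,\theta)$.

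The second step is to run through the normal subgroup lattice of $H$ under the hypothesis $\Alt(n)\leqslant H\leqslant\Sym(n)$. For $n\geqslant 5$ with $n\neq 6$, the normal subgroups of $\Alt(n)$ are $1$ and $\Alt(n)$, and those of $\Sym(n)$ are $1$, $\Alt(n)$, $\Sym(n)$; the corresponding quotients have pairwise distinct orders, which forces $N_1=N_2$. This yields exactly the full product $H\times H$ (giving $\Sym(n)^2$ or $\Alt(n)^2$, cases (a), (b)); the $C_2$-quotient $N_1=N_2=\Alt(n)$ with $H=\Sym(n)$, where $\theta$ is the unique isomorphism of $C_2$ and $G^+$ is the index-two equal-parity subgroup of case (c); and the diagonal $N_1=N_2=1$ with $\theta\in\Aut(H)$, so $G^+=\Diag_\theta(H\times H)$. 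For the diagonal, Lemma~\ref{lem:diag_rep_enough} lets me reduce $\theta$ modulo $H^*=N_{\Sym(n)}(H)/C_{\Sym(n)}(H)$; since $\Aut(\Sym(n))=\Inn(\Sym(n))$ and $\Aut(\Alt(n))=\Sym(n)$ in this range, we have $H^*=\Aut(H)$ and $\theta$ may be taken trivial, giving $\Diag(H\times H)$ (cases (d), (e)). The exceptional values of $n$ enter precisely here: for $n=6$, $H^*$ has index two in $\Aut(H)$, so the transversal has a second element $\varphi$, a chosen outer automorphism of order two, producing the genuinely new groups $\Diag_\varphi(H\times H)$ of cases (f), (g); and for $n=4$, the extra normal subgroup $V_4$ gives Goursat data $N_1=N_2=V_4$ with $\theta\in\Aut(H/V_4)$, which after reducing $\theta$ by the automorphisms induced by $N_{\Sym(4)}(H)$ (covering all of $\Aut(\Sym(3))$ or $\Aut(C_3)$) yields cases (h), (i). The cases $n\leqslant 3$ are degenerate and checked by hand, falling under the listed rows.

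Next I would determine $\Gamma$ from $G^+$ by analysing the action of $G^+\leqslant H\times H$ on the set of ordered pairs $\{1,\dots,n\}^2$, identified with the potential edges $v_i\sim u_j$. Edge-transitivity forces the edge set to be a single $G$-orbit, and connectivity forbids it from being a perfect matching. Whenever $G^+$ contains $\Alt(n)\times\Alt(n)$ (cases (a)--(c)), or in cases (h), (i) the regular subgroup $V_4\times V_4$, the group is already transitive on all $n^2$ pairs, so $\Gamma=K_{n,n}$. For the inner diagonals $\Diag(H\times H)$ there are exactly two orbits, the diagonal $\{(i,i)\}$ and its complement; the diagonal gives a disconnected matching and the full set is not edge-transitive, so the edge set must be the complement and $\Gamma=K_{n,n}-nK_2$ (cases (d), (e)). The key exceptional computation is for the twisted diagonals at $n=6$: since $\varphi$ interchanges the two inequivalent transitive degree-six actions of $\Sym(6)$ (equivalently, the two $5$-dimensional irreducibles occur in distinct permutation characters), an orbit-counting argument shows $\Diag_\varphi(H\times H)$ is transitive on all $36$ pairs, whence $\Gamma=K_{6,6}$ (cases (f), (g)).

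Finally I would pin down $G$ from $G^+$. Writing a part-swapping element as $(x,y)\tau$ and using that $\tau$ acts on $H\times H$ by interchanging coordinates, the conjugate $(x,y)\tau\,(a,b)\,\tau(x,y)^{-1}=(xbx^{-1},yay^{-1})$ must lie in $G^+$ for all $(a,b)\in G^+$, and the element must preserve $\Gamma$; solving these constraints determines the admissible elements and hence all index-two extensions $G$ of $G^+$, which is the source of the second $G$-row in cases (c), (e), (g), (i) (for instance, $\tau$ itself is admissible precisely when the relevant twist satisfies $\varphi^2=\id$). I expect the main obstacle to be the exceptional cases $n=4,6$: verifying that the outer automorphism $\varphi$ of $\Sym(6)$ has the claimed order and orbit behaviour, and that the $V_4$-based Goursat subgroups at $n=4$ genuinely recover $K_{4,4}$ and admit only the listed extensions, both require group-specific computation rather than the uniform argument covering $n\geqslant 5$, $n\neq 6$.
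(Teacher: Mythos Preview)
Your proposal is correct, and your classification of $G^+$ via Goursat's lemma is a genuinely different (and arguably more systematic) route than the paper's. The paper instead argues through the kernels $K_1,K_2$ of the actions of $G^+$ on $\Delta,\Delta'$: if $K_1\neq 1$, then primitivity of $(G^+)^{\Delta'}$ forces $K_1\supseteq\Alt(n)$, giving the product-type groups (a)--(c); if $K_1=1$, then $G^+$ acts faithfully on each part and is therefore a twisted diagonal $\Diag_\varphi(H\times H)$, after which Lemma~\ref{lem:diag_rep_enough} reduces $\varphi$ modulo $H^*$ to yield (d)--(g). The paper then disposes of $n\leqslant 4$ by direct computer check rather than by incorporating the extra normal subgroup $V_4$ into a uniform argument as you do. What your approach buys is uniformity: the same Goursat enumeration handles all $n$, with the exceptional rows (f)--(i) appearing naturally from extra normal subgroups or extra outer automorphisms. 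What the paper's approach buys is that it is entirely self-contained, needing no external structural lemma beyond Lemma~\ref{lem:diag_rep_enough}. Your determination of $\Gamma$ from the $G^+$-orbits on pairs, and of $G$ from the normaliser condition on $(x,y)\tau$, proceed along the same lines as the paper's.
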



                  


\begin{remark} Before we get to the proof, note that the embedding theorem (see \cite[Theorem 5.5]{PS}) states that  up to conjugation   in $\Sym(2n)$   $G \leqslant H \wr \Sym(2)=(H\times H)\rtimes\langle \tau\rangle$. This is not the route we chose here, but we show here, in each case, which subgroup of $H \wr \Sym(2)$ $G$ is conjugate to.
First note that $G\leqslant  H \wr \Sym(2)$ in all cases except the second line of cases (e), (g), and (i). 

In these three cases, $H=\Alt(n)$ and   $G\not\leqslant  H \wr \Sym(2)$.
However $G^{(\mathrm{id},(1,2))}\leqslant  H \wr \Sym(2)$.

Indeed it is easy to check that for $G$ as in the second line of (e) 
$$G^{(\mathrm{id},(1,2))}=\Diag_\mu(\Alt(n)\times \Alt(n))\rtimes \langle\tau\rangle$$
where $\mu$ is the automorphism of $H$ induced by conjugation by $(1,2)$.

For $G$ as in the second line of  (g) we have 
$$G^{(\mathrm{id},(1,2))}=\Diag_{\varphi\mu}(\Alt(n)\times \Alt(n))\rtimes \langle(\mathrm{id},(1,2)(1,2)^\varphi)\tau\rangle,$$
and $(\mathrm{id},(1,2)(1,2)^\varphi)\tau\in \Alt(n)\wr \Sym(2)$ since $(1,2)$ and $(1,2)^\varphi$ are both odd permutations.

Finally for $G$ as in the second line of (i), 
$$G^{(\mathrm{id},(1,2))}=\langle V_4\times V_4,\Diag_\mu(\Alt(4)\times \Alt(4))\rangle \rtimes \langle\tau\rangle.$$

\end{remark}

\begin{proof}
It is straightforward to check computationally that 
\begin{itemize}
\item one of (a) or the second line of case (c) holds if $n=2$. Note that first line of case (c) (which is equal to (d))  does not yield a graph $\Gamma$ satisfying the required conditions. 
   \item  one of (a)--(e) holds if $n=3$,  but  case of (e)(2)  does not yield any graph $\Gamma$ satisfying the required conditions.
   \item one of (a)--(e), (h)--(i) holds if $n=4$. 
\end{itemize}

From now we assume that $n\geq 5$, so that $\Alt(n)$ is simple and $H$ has trivial centre.

Let $K_1$ be the kernel of the action of $G^+$ on $\Delta$ and $K_2$ be the kernel of the action of $G^+$ on $\Delta'$. By definition $K_1\cap K_2=1$, so $K_1\times K_2\leqslant G^+.$ Note that $K_1\cong K_2$ since $G$ is transitive on  $\Delta\cup\Delta'$.

Assume $K_1\neq 1$. Since $K_1\lhd (G^+)^{\Delta'}$, and $(G^+)^{\Delta'}$ is primitive on $\Delta'$, it follows that $K_1$ is a transitive normal subgroup of $(G^+)^{\Delta'}$. 
We claim that one of (a), (b), (c) holds.

Note $K_1$ contains $\soc((G^+)^{\Delta'})\cong \Alt(n)$, so $K_1\cong \Sym(n)$ or $K_1\cong \Alt(n)$.

Assume first that $K_1\cong \Sym(n)$. Then $G^+=\Sym(n) \times \Sym(n)$ and (a) holds since $G$ is transitive.

Now assume that  $K_1\cong \Alt(n)$, so $\Alt(n) \times \Alt(n)\leqslant G^+<\Sym(n) \times \Sym(n)$.

If $G^+=\Alt(n) \times \Alt(n)$, then $G=G^+\rtimes \langle(a,b)\tau\rangle$ for some $a,b\in\Sym(n)$. It follows that $((a,b)\tau)^2=(ab,ba)\in G^+$, and thus $a,b$ have the same parity. If they are both even then $(a,b)\in G^+$ and so  $G=G^+\rtimes \langle(a,b)\tau\rangle=G^+\rtimes \langle \tau\rangle$. If they are both odd then $(a,b)((1,2),(1,2))\in G^+$, $G=G^+\rtimes \langle(a,b)\tau\rangle=G^+\rtimes \langle ((1,2),(1,2)) \tau\rangle$. Finally notice that $\tau^{((1,2),\mathrm{id})}=((1,2),(1,2)) \tau$, so that $G^+\rtimes \langle \tau\rangle$ and $G^+\rtimes \langle ((1,2),(1,2)) \tau\rangle$ are conjugate under $\Sym(n) \times \Sym(n)$. Thus (b) holds.

Otherwise,  the fact that $(G^+)^{\Delta}\cong (G^+)^{\Delta'}$ implies that $G^+=\langle\Alt(n)\times \Alt(n),((1,2),(1,2))\rangle.$ 
Again $G=G^+\rtimes \langle(a,b)\tau\rangle$ for some $a,b\in\Sym(n)$. If $a$ and $b$ have the same parity then $(a,b)\in G^+$ and so  $G=G^+\rtimes \langle(a,b)\tau\rangle=G^+\rtimes \langle \tau\rangle$. If $a$ and $b$ have different parity then $(a,b)((1,2),\mathrm{id})\in G^+$, $G=G^+\rtimes \langle(a,b)\tau\rangle=G^+\rtimes \langle ((1,2),\mathrm{id}) \tau\rangle$. Thus (c) holds.

This concludes the case $K_1\neq 1$.

Now assume $K_1=1$, that is $G^+$ acts faithfully on $\Delta$ (and hence also on $\Delta'$). 
    For each $g\in G^+$, we may write $g=(g^\Delta, g^{\Delta'}) \in H \times H$ and $g^{\Delta}=1$ if and only if $g^{\Delta'}=1$. Since $H=(G^+)^{\Delta}$, for all $h\in H$ there is a unique $g_h\in G^+$ such that $(g_h)^{\Delta}=h$. Define $\varphi :H \rightarrow H$ by declaring $\varphi :h \mapsto (g_h)^{\Delta'}$ so we may write $g_h = (h,h^\varphi)$. Since $g^\Delta = 1$ if and only if $g^{\Delta'}=1$ we have $h^\varphi =1$ if and only if $h=1$. For $h,k\in H$ we have
\[
g_h g_k = (h,h^\varphi)(k,k^\varphi) =( hk , h^\varphi k^\varphi) = (hk, (hk)^\varphi)
\]
so that $h^\varphi k^\varphi = (hk)^{\varphi}$. Hence $\varphi \in \Aut(H)$ and $G^+ = \{ (h,h^\varphi) \mid h\in H\}=\Diag_\varphi(H\times H)$.

Now we use Lemma \ref{lem:diag_rep_enough}. 


   If $H^*= \Aut(H)$, then up to conjugation in  $\Sym(n)\times \Sym(n)$, we may assume that $G^+=\Diag(H\times H)$.
  If $H^*\neq \Aut(H)$, then 
$n=6$ and  $H^*$ has index $2$ in $\Aut(H)$, so up to conjugation  in  $\Sym(n)\times \Sym(n)$, we have that  $G^+=\Diag(H\times H)$ or $G^+=\Diag_\varphi(H\times H)$, where $\varphi$ is a chosen outer automorphism of $\Sym(6)$ of order 2. We examine the two cases for $G^+$ separately.

First we assume that  $G^+=\Diag(H\times H)$.
Then $G=G^+\rtimes \langle (a,b)\tau\rangle$ for some $a,b\in \Sym(n)$.
For any $(h,h)\in G^+$ we have that $(h,h)^{(a,b)\tau}=(h^b,h^a)\in G^+=\Diag(H\times H)$. Thus $ba^{-1}$ centralises $H$, and so $a=b$ since $Z(H)$ is trivial.

If $H=\Sym(n)$, then $(a,a)\in G^+$, and so  $G=G^+\rtimes \langle (a,a)\tau\rangle=G^+\rtimes \langle \tau\rangle$. Thus (d) holds.

Suppose  $H=\Alt(n)$. Then we have two possibilities.
When $a$ is even, then  $(a,a)\in G^+$, and so  $G=G^+\rtimes \langle (a,a)\tau\rangle=G^+\rtimes \langle\tau\rangle$. 
When $a$ is odd, then  $(a,a)((1,2),(1,2))\in G^+$, and so  $G=G^+\rtimes \langle (a,a)\tau\rangle=G^+\rtimes \langle((1,2),(1,2))\tau\rangle$. Thus (e) holds.


Finally assume that $G^+=\Diag_\varphi(H\times H)$, where $\varphi$ is a chosen outer automorphism of $\Sym(6)$ of order 2.
Then $G=G^+\rtimes \langle (a,b)\tau\rangle$ for some $a,b\in \Sym(n)$.
For any $(h,h^\varphi)\in G^+$ we have that $(h,h^\varphi)^{(a,b)\tau}=((h^\varphi)^b,h^a)\in G^+$. Thus $h^a=((h^\varphi)^b)^\varphi=h^{b^\varphi}$ (since we assumed $\varphi$ has order 2). It follows that $a=b^\varphi$ (or equivalently, that  $b=a^\varphi$) since $Z(H)$ is trivial.
Thus $G=G^+\rtimes \langle (a,a^\varphi)\tau\rangle$.

If $H=\Sym(n)$, then $(a,a^\varphi)\in G^+$, and so  $G=G^+\rtimes \langle (a,a^\varphi)\tau\rangle=G^+\rtimes \langle \tau\rangle$. Thus (f) holds.

Suppose  $H=\Alt(n)$. Then we have two possibilities.
When $a$ is even, then  $(a,a^\varphi)\in G^+$, and so  $G=G^+\rtimes \langle (a,a^\varphi)\tau\rangle=G^+\rtimes \langle\tau\rangle$. 
When $a$ is odd, then  $(a,a^\varphi)((1,2),(1,2)^\varphi)\in G^+$, and so  $G=G^+\rtimes \langle (a,a^\varphi)\tau\rangle=G^+\rtimes \langle((1,2),(1,2)^\varphi)\tau\rangle$. Thus (g) holds.

We now prove the assertions about the graph $\Gamma$. We assumed that the graph is connected, so there exists an edge $e=\{v_i,u_j\}$ of $\Gamma$

In cases (a)--(c), we have seen that $K_1$ is a transitive normal subgroup of $(G^+)^{\Delta'}$. Therefore every vertex in $\Delta'$ is adjacent to $v_i$, and by vertex-transitivity we get that $\Gamma=K_{n,n}.$

Assume one of cases (d)--(e) holds. Consider the edge $e=\{v_i,u_j\}$.  If $i=j$, then the orbit of this edge under $G$ is simply the set of pairs $\{v_t,u_t\}$ for $t=1,2,\ldots,n$. Since $\Gamma$ is $G$-edge transitive, it follows that $\Gamma$ is not connected. Thus  $i\neq j$.

Since $H$ is 2-transitive,   there is a permutation in $H$ fixing $i$ and mapping $j$ to any element different from $i$. Thus all edges $\{v_i,u_t\}$ where $t\neq i$ are in the $G$-orbit of $e$.  By vertex-transitivity, the $G$-orbit of $e$ consists exactly of  all pairs $\{v_k,u_\ell\}$ where $k\neq \ell$. Since $\Gamma$ is $G$-edge transitive, we get that $\Gamma=K_{n,n}-nK_2$.

Assume one of cases (f)--(g) holds.
Consider the edge $e=\{v_i,u_j\}$. 
    Point stabilisers in $H$ and in $H^\varphi$ are not conjugate. Note that \(\Alt(5)\leqslant H_{v_{i}}\leqslant \Sym(5)\) and so \(\PSL_{2}(5)\leqslant H_{v_{i}}^\varphi\leqslant \PGL_{2}(5)\). In particular $H_{v_i}^\varphi$ is transitive on $\Delta'$. It follows that $\Gamma=K_{6,6}$.

 Finally assume one of cases (h)--(i) holds.
Then $K_1$ is $V_4$ acting transitively on $\Delta'$. It follows that $\Gamma=K_{4,4}$.
\end{proof}

\section{Complete bipartite graphs}

In this section we compute the distinguishing numbers of some families of groups that act $2$-arc-transitively on a complete bipartite graph $K_{n,n}$. 

The following proposition tackles cases (a)--(c) of Proposition \ref{group-cases}.

\begin{proposition}\label{prop:notfaithful}
Let $\Gamma = K_{n, n}$ with $n\geqslant 2$ and suppose $\Alt(n)\times \Alt(n)\leqslant G \leqslant \Sym(n) \wr \Sym(2) = \Aut(\Gamma)$ with $G$ transitive on $V\Gamma$. Then 
$$D(G) =\begin{cases}
    n-1&\text{ if }G^+=\Alt(n)\times \Alt(n)\\
   n& \text{ if }G^+=\langle \Alt(n)\times \Alt(n),((1,2),(1,2))\rangle\\
    n+1&\text{ if }G^+=\Sym(n)\times \Sym(n)\\
\end{cases} $$
\end{proposition}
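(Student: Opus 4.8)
The plan is to recast everything as combinatorics of two colour-sequences and then obtain all three values from one counting fact plus a uniform way of killing the swapping elements. First I would fix the dictionary. A $k$-colouring of $V\Gamma$ is the same datum as a pair of maps $\alpha,\beta\colon\{1,\dots,n\}\to\{1,\dots,k\}$ with $\alpha(i)=c(v_i)$ and $\beta(i)=c(u_i)$. By the action recorded in Section~\ref{sec:prelims} (namely $v_i^{(g,g')}=v_{i^g}$ and $u_i^{(g,g')}=u_{i^{g'}}$), an element $(g,g')\in G^+$ fixes the colouring precisely when $g\in S_\alpha$ and $g'\in S_\beta$, where $S_\alpha=\prod_{j=1}^{k}\Sym(\alpha^{-1}(j))\leqslant\Sym(n)$ is the Young subgroup stabilising the colour-classes of $\alpha$, and likewise for $S_\beta$. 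A swapping element $(g,g')\tau$ fixes the colouring only if $v_i\mapsto u_{i^g}$ preserves colours, which forces the multiset of colours occurring on $\Delta$ to equal that occurring on $\Delta'$.

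The combinatorial core is the elementary fact, which I would prove first, that $S_\alpha\cap\Alt(n)=1$ if and only if $|S_\alpha|\leqslant 2$, i.e. if and only if $\alpha$ is injective except for at most one colour used exactly twice; this is realisable on $n$ points only when $k\geqslant n-1$, while $S_\alpha=1$ (that is, $\alpha$ injective) needs $k\geqslant n$. The three rows now differ only in which pairs $(g,g')$ lie in $G^+$. For $G^+=\Alt(n)\times\Alt(n)$ both coordinates must be even, so the fixing condition reads $g\in S_\alpha\cap\Alt(n)$ and $g'\in S_\beta\cap\Alt(n)$; triviality of the $G^+$-part is then equivalent to both intersections being trivial, giving the threshold $k\geqslant n-1$ (if $k\leqslant n-2$ then $S_\alpha\cap\Alt(n)\neq 1$ and $(g,1)$ fixes the colouring for some nontrivial even $g$). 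For $G^+=\Sym(n)\times\Sym(n)$ every pair is admissible, so one needs $S_\alpha=S_\beta=1$, i.e. $k\geqslant n$; moreover $k=n$ still fails, because then both colour sets equal $\{1,\dots,n\}$, the two multisets agree, and the matching swap $(g,g')\tau\in\Sym(n)\wr\Sym(2)=G$ fixes the colouring, forcing $k\geqslant n+1$.

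\emph{The middle case is the crux, and the step I expect to demand the most care.} For $G^+=\langle\Alt(n)\times\Alt(n),((1,2),(1,2))\rangle$ a pair $(g,g')$ lies in $G^+$ if and only if $g$ and $g'$ have equal parity, so besides the two ``both-even'' constraints one must also exclude every ``both-odd'' fixing pair. Feeding this into the counting fact shows that the $G^+$-part is trivial if and only if $S_\alpha\cap\Alt(n)=S_\beta\cap\Alt(n)=1$ and, in addition, $S_\alpha$ and $S_\beta$ are not both isomorphic to $\Sym(2)$ --- equivalently at least one of $\alpha,\beta$ is injective. That last requirement raises the threshold to $k\geqslant n$, and it is precisely the parity coupling (the presence of the odd element $((1,2),(1,2))$) that separates the value $n$ here from the value $n-1$ of the alternating case.

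For the upper bounds I would write down explicit optimal colourings, using a single device to dispose of all swapping elements uniformly; this is convenient because a fixed $G^+$ may admit more than one transitive $G$ (for example the two groups in case~(c) of Proposition~\ref{group-cases}), and I prefer not to track their swap cosets. In each case I arrange the colour multisets of $\Delta$ and $\Delta'$ to differ: for $\Alt(n)\times\Alt(n)$ with $n-1$ colours I double a different colour on each side; for the middle case with $n$ colours I keep one side injective and double a colour on the other; for $\Sym(n)\times\Sym(n)$ with $n+1$ colours I use the colour sets $\{1,\dots,n\}$ and $\{2,\dots,n+1\}$. Since the two multisets then disagree, there is no colour-preserving bijection $\Delta\to\Delta'$, so no swapping element of any admissible $G$ can fix the colouring; as the choices of $S_\alpha,S_\beta$ already kill the $G^+$-part, each colouring is distinguishing. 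Matching these constructions against the lower bounds above yields $D(G)=n-1$, $n$, and $n+1$ respectively.
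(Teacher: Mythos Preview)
Your argument is correct and is essentially the paper's proof recast in the language of Young subgroups: your ``combinatorial core'' (that $S_\alpha\cap\Alt(n)=1$ iff at most one colour is doubled) is exactly the paper's opening lower-bound paragraph, and your three colouring recipes are abstract descriptions of the explicit partitions the paper writes down and draws. The only substantive additions are that you supply a self-contained argument for the $\Sym(n)\times\Sym(n)$ case where the paper simply cites \cite{CT2006}, and your uniform ``multisets differ'' device disposes of all swap elements at once, so you need not treat the two possible groups $G$ in case~(c) separately.
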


\begin{proof}

Let \(k:=D(G)\) and \(\Sigma=\{\Sigma_{1},\ldots, \Sigma_{k}\}\) be a distinguishing partition of \(V\Gamma\), so that \(G_{(\Sigma)}=1\).

If $|\Sigma_i\cap \Delta|> 2$ for some $i$, say $v_a,v_b,v_c\in \Sigma_i\cap \Delta$, then $g=((a,b,c),\mathrm{id})\in  G_{(\Sigma)}$, a contradiction.
So $|\Sigma_i\cap \Delta|\leqslant 2$ for each $i$.
Suppose there are two parts $\Sigma_i,\Sigma_j$ containing two points of $\Delta$ each. Say $v_a,v_b\in \Sigma_i\cap \Delta, v_c,v_d\in \Sigma_j\cap \Delta$. Then $g=((a,b)(c,d),\mathrm{id})\in  G_{(\Sigma)}$, a contradiction.
So all parts $\Sigma_i$ contain at most one point of $\Delta$, except possibly one part that contains two points. Thus $k\geqslant n-1$.

Assume first that $G^+=\Alt(n)\times \Alt(n)$, that is $G=\Alt(n)\wr \Sym(2)=\langle \Alt(n)\times \Alt(n),\tau\rangle$ by  Proposition \ref{group-cases}. Note that $n\geqslant 3$ since $G$ is transitive.

We will show that \(k\leqslant n-1\) by displaying a partition \(\mathcal{P}=\{P_1,\ldots, P_{n-1}\}\)  with trivial stabiliser \(G_{(\mathcal{P})}\). 

We take
\begin{equation*} 
P_{i}  \coloneqq
\begin{cases}
  \{v_1,v_2,u_1\}& \text{if } i=1\\ 
   \{v_3,u_2,u_3\}& \text{if } i=2\\ 
  \{u_{i+1},v_{i+1}\} & \text{if } 3\leqslant i\leqslant n-1.
\end{cases}
\end{equation*}

\begin{center}
\begin{tikzpicture}
    \drawNodes
    \begin{pgfonlayer}{bg}
    \draw [line width = \thinLine, color=Red, fill=Red!10, rounded corners]
    ($ (v_1.north west) + (-\extH, \extV) $) --
    ($ (v_2.north east) + (\extH, \extV) $) --
    ($ (v_2.south east) + (\extH, -\extV) $) --
    ($ (v_1.south east) + (\extH, -\extV) $) --
    ($ (u_1.south east) + (\extH, -\extV) $) --
    ($ (u_1.south west) + (-\extH, -\extV) $) --
    cycle;
    \draw [line width = \thinLine, color=Blue, fill=Blue!10, rounded corners]
    ($ (v_3.north west) + (-\extH, \extV) $) --
    ($ (v_3.north east) + (\extH, \extV) $) --
    ($ (u_3.south east) + (\extH, -\extV) $) --
    ($ (u_2.south west) + (-\extH, -\extV) $) --
    ($ (u_2.north west) + (-\extH, \extV) $) --
    ($ (u_3.north west) + (-\extH, \extV) $) --
    cycle;
    \draw [line width = \thinLine, color=Orange, fill=Orange!10, rounded corners]
    ($ (v_4.north west) + (-\extH, \extV) $) --
    ($ (v_4.north east) + (\extH, \extV) $) --
    ($ (u_4.south east) + (\extH, -\extV) $) --
    ($ (u_4.south west) + (-\extH, -\extV) $) --
    cycle;
    \draw [line width = \thinLine, color=Purple, fill=Purple!10, rounded corners]
    ($ (v_n.north west) + (-\extH, \extV) $) --
    ($ (v_n.north east) + (\extH, \extV) $) --
    ($ (u_n.south east) + (\extH, -\extV) $) --
    ($ (u_n.south west) + (-\extH, -\extV) $) --
    cycle;
    \end{pgfonlayer}
\end{tikzpicture}
\end{center}

Let \(g\in G_{(\mathcal{P})}\). Then $g\in G^+=\Alt(n)\times \Alt(n)$ since $|P_1\cap\Delta|\neq |P_1\cap\Delta'|$.
Thus $g$ fixes $P_i\cap \Delta$ and $P_i\cap \Delta'$ for each $i$. In particular $g$ fixes the vertices $v_i$ for $i\geqslant 3$ and $u_j$ for $j=1$ and $j\geqslant 4$.
Thus $g\in\langle ((1,2),\mathrm{id}),(\mathrm{id},(2,3))\rangle$. This subgroup intersects $\Alt(n)\times \Alt(n)$ trivially, and so $g=1$. It follows that $G_{(\mathcal{P})}=1$ and so $k\leqslant n-1$. Therefore $k=n-1$.

Assume now that $G^+=\langle \Alt(n)\times \Alt(n),((1,2),(1,2))\rangle$, so that by  Proposition \ref{group-cases} $G=\langle \Alt(n)\times \Alt(n),((1,2),(1,2)),\tau\rangle$ or $G=\langle \Alt(n)\times \Alt(n),((1,2),\mathrm{id})\tau\rangle$.

Recall that $k\geqslant n-1$.
Suppose $k=n-1$. 
Then all parts of $\Sigma$ intersect $\Delta$ in exactly one point, except for one, and similarly for $\Delta'$.  Say $v_a,v_b\in \Sigma_i\cap \Delta$ and   $u_c,u_d\in \Sigma_j\cap \Delta'$ (where $i$ could  be equal to $j$). Then $g=((a,b),(c,d))\in G^+$ since both entries have the same parity, and thus $g\in G_{(\Sigma)}$, a contradiction. Therefore $k\geqslant n$.

We will show that \(k\leqslant n\) by displaying a partition \(\mathcal{P}\) of size \(n\) with trivial stabiliser \(G_{(\mathcal{P})}\). 

Let $\mathcal P = \{P_1,\ldots,P_{n}\}$ be as below and let \(g\in G_{(\mathcal{P})}\).
\begin{equation*} 
P_{i}  \coloneqq
\begin{cases}
  \{v_1,v_2,u_1\}& \text{if } i=1\\ 
   \{u_2\}& \text{if } i=2\\ 
  \{u_i,v_i\} & \text{if } 3\leqslant i\leqslant n
\end{cases}
\end{equation*}

\begin{center}
\begin{tikzpicture}
    \drawNodes
    \begin{pgfonlayer}{bg}
    \draw [line width = \thinLine, color=Red, fill=Red!10, rounded corners]
    ($ (v_1.north west) + (-\extH, \extV) $) --
    ($ (v_2.north east) + (\extH, \extV) $) --
    ($ (v_2.south east) + (\extH, -\extV) $) --
    ($ (v_1.south east) + (\extH, -\extV) $) --
    ($ (u_1.south east) + (\extH, -\extV) $) --
    ($ (u_1.south west) + (-\extH, -\extV) $) --
    cycle;
    \draw [line width = \thinLine, color=Blue, fill=Blue!10, rounded corners]
    ($ (u_2.north west) + (-\extH, \extV) $) --
    ($ (u_2.north east) + (\extH, \extV) $) --
    ($ (u_2.south east) + (\extH, -\extV) $) --
    ($ (u_2.south west) + (-\extH, -\extV) $) --
    cycle;
    \draw [line width = \thinLine, color=Green, fill=Green!10, rounded corners]
    ($ (v_3.north west) + (-\extH, \extV) $) --
    ($ (v_3.north east) + (\extH, \extV) $) --
    ($ (u_3.south east) + (\extH, -\extV) $) --
    ($ (u_3.south west) + (-\extH, -\extV) $) --
    cycle;
    cycle;
    \draw [line width = \thinLine, color=Orange, fill=Orange!10, rounded corners]
    ($ (v_4.north west) + (-\extH, \extV) $) --
    ($ (v_4.north east) + (\extH, \extV) $) --
    ($ (u_4.south east) + (\extH, -\extV) $) --
    ($ (u_4.south west) + (-\extH, -\extV) $) --
    cycle;
    \draw [line width = \thinLine, color=Purple, fill=Purple!10, rounded corners]
    ($ (v_n.north west) + (-\extH, \extV) $) --
    ($ (v_n.north east) + (\extH, \extV) $) --
    ($ (u_n.south east) + (\extH, -\extV) $) --
    ($ (u_n.south west) + (-\extH, -\extV) $) --
    cycle;
    \end{pgfonlayer}
\end{tikzpicture}
\end{center}

Then $g\in G^+=\langle \Alt(n)\times \Alt(n),((1,2),(1,2))\rangle$ since $P_2\subset \Delta'$.
Thus $g$ fixes $P_i\cap \Delta$ and $P_i\cap \Delta'$ for each $i$. In particular $g$ fixes the vertices $v_i$ for $i\geqslant 3$ and $u_j$ for all $j$.
Thus $g\in\langle ((1,2),\mathrm{id})\rangle$. This subgroup intersects $G^+$ trivially, and so $g=1$. It follows that $G_{(\mathcal{P})}=1$ and so $k\leqslant n$.
Thus $D(G) = n$

Finally assume that $G^+=\Sym(n)\times \Sym(n)$. It follows that   $G=\Sym(n)\wr \Sym(2) = \Aut(K_{n,n})$. Hence $D(G)=n+1$ by \cite[Theorem 2.4]{CT2006} and the proof is complete.




\end{proof}


The following propositions tackle cases (f)--(i) of Proposition \ref{group-cases}.

\begin{proposition}\label{prop:DGdiag-outer}
Let \(\Gamma:=K_{6,6}\) and assume \(G^+= \Diag_\varphi(H \times H)\)  where $H=\Alt(6)$ or $\Sym(6)$ and $\varphi$ is a chosen outer automorphism of $\Sym(6)$ of order $2$. Then \(D(G)=3\).
\end{proposition}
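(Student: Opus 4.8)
The plan is to treat the lower bound $D(G)\ge 3$ and the upper bound $D(G)\le 3$ separately, working throughout with the explicit description of $G$ from Proposition~\ref{group-cases}: we have $G^+=\{(h,h^\varphi):h\in H\}$ with $H=\Alt(6)$ or $\Sym(6)$, and $G=G^+\rtimes\langle\tau\rangle$, together with the second form $G=G^+\rtimes\langle((1,2),(1,2)^\varphi)\tau\rangle$ when $H=\Alt(6)$. A colouring is encoded by its colour classes; in particular a $2$-colouring is determined by a pair $(A,B)$ with $A\subseteq\{1,\dots,6\}$ indexing the colour-$1$ vertices of $\Delta$ and $B$ indexing those of $\Delta'$. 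Writing $H_X$ for the setwise stabiliser in $H$ of a subset $X$, an element $(h,h^\varphi)\in G^+$ preserves such a colouring exactly when $A^h=A$ and $B^{h^\varphi}=B$, so its $G^+$-stabiliser is $H_A\cap\{h:h^\varphi\in H_B\}=\varphi^{-1}(H_A^\varphi\cap H_B)$, which is trivial if and only if $H_A^\varphi\cap H_B$ is. An element of the outer coset $G^+\tau$ preserves the colouring only if it carries $\Delta$-classes to equally sized $\Delta'$-classes, hence only if the colour-class size multisets on $\Delta$ and $\Delta'$ coincide; in particular for a $2$-colouring it forces $|A|=|B|$.

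For the lower bound it suffices to show that the $G^+$-stabiliser of every $2$-colouring is nontrivial, since $G^+\le G$. The main tool is the product bound $|H_A\cap K|\ge |H_A|\,|K|/|H|$ with $K=\{h:h^\varphi\in H_B\}$, noting $|K|=|H_B|$. Since $|H_X|=|X|!\,(6-|X|)!$, intersected with $\Alt(6)$ in the alternating case, the minimum of $|H_X|$ is $36$ for $\Sym(6)$ and $18$ for $\Alt(6)$, attained at $|X|=3$. For $H=\Sym(6)$ this already gives $|H_A\cap K|\ge 36^2/720=1.8>1$ for every $(A,B)$, settling the symmetric case. For $H=\Alt(6)$ the bound dispatches every size combination except $|A|=|B|=3$, where $18^2/360=0.9$ is inconclusive.

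The case $H=\Alt(6)$, $|A|=|B|=3$ I would resolve by analysing $H_A^\varphi$, the image of a $3$-set stabiliser under the exceptional outer automorphism; here $H_A^\varphi\cong H_A$ is generalised dihedral of order $18$, with a unique normal elementary abelian Sylow $3$-subgroup $P$. Whatever its permutation type, $P$ acts with two orbits of size $3$ and hence fixes exactly two $3$-subsets $Y,Y^c$, while each of the nine involutions of $H_A^\varphi$ is a double transposition fixing exactly four $3$-subsets. The claim is that these fixed sets exhaust all twenty $3$-subsets, so that $H_A^\varphi\cap H_B\neq 1$ for every $B$. When $H_A^\varphi$ is intransitive it is a genuine $3$-set stabiliser $H_{A'}$ and the claim is transparent: the common refinement of the partitions $\{A',A'^c\}$ and $\{B,B^c\}$ has block sizes $k,3-k,3-k,k$ with $k=|A'\cap B|$, so a block of size at least $2$ yields a nontrivial even double transposition in $H_{A'}\cap H_B$. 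Pinning down the behaviour of $\varphi$ on $3$-set stabilisers — in particular whether the covering claim can fail, in which case one would instead rescue the colouring with an element of $G^+\tau$ matching $A$ to $B$ — is the crux, and I expect this finite question about $\Aut(\Sym(6))$ to be the main obstacle; it can in any case be confirmed by a short computation.

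For the upper bound I would exhibit an explicit distinguishing partition into three parts, colouring $\Delta$ and $\Delta'$ with \emph{different} colour-class size distributions, for instance $(3,2,1)$ on $\Delta$ and $(4,1,1)$ on $\Delta'$. As these multisets differ, no colour-preserving element can interchange $\Delta$ and $\Delta'$; this kills the entire outer coset at once and so handles all admissible forms of $G$ uniformly. It then remains to choose the underlying point-sets so that the $G^+$-stabiliser $H_A^\varphi\cap H_B$ is trivial, where now $H_A,H_B$ are the small stabilisers of the chosen colourings, of orders $12,24$ for $\Sym(6)$ and $6,12$ for $\Alt(6)$; because $\varphi$ does not preserve the intransitive structure of these stabilisers, their $\varphi$-twisted intersection can be made trivial, and verifying triviality for an explicit choice is a routine finite check. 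Combining the two bounds gives $D(G)=3$.
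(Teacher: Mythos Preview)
The paper's own proof is a single sentence: ``A {\sc Magma} computation confirms this result.'' Your proposal goes well beyond this, giving a genuine structural argument that reduces both bounds to small residual checks. For the lower bound your product estimate $|H_A\cap\varphi^{-1}(H_B)|\ge |H_A|\,|H_B|/|H|$ is clean and disposes of everything except $H=\Alt(6)$, $|A|=|B|=3$; there your ``covering claim'' is in fact provable without knowing how $\varphi$ acts on $3$-set stabilisers. Since $\varphi$ swaps the $\Alt(6)$-classes of $3$-cycles and double $3$-cycles and fixes double transpositions, $H_A^\varphi$ has the same cycle-type profile as $H_A$ (identity, four $3$-cycles, four double $3$-cycles, nine double transpositions), so Burnside on $3$-subsets gives $(20+4\cdot2+4\cdot2+9\cdot4)/18=4$ orbits; four positive divisors of $18$ cannot sum to $20$ if one of them equals $18$, hence there is no regular orbit and every $3$-set has nontrivial stabiliser in $H_A^\varphi$. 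This closes the one gap you flagged in the lower bound, and your fallback via the $\tau$-coset is unnecessary. For the upper bound your idea of forcing unequal size tuples on $\Delta$ and $\Delta'$ to kill the outer coset uniformly is sound; what remains is to name one explicit $3$-colouring and check that its $G^+$-stabiliser is trivial, which is still a finite verification. So compared to the paper you trade a wholesale computer search for a short conceptual argument plus one explicit witness; the cost is that, as written, your proof is still a sketch until that witness is produced and checked.
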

\begin{proof}
A {\sc Magma} computation confirms this result.
\end{proof}



\begin{proposition}\label{prop:n=4}
Let \(\Gamma:=K_{4,4}\) and \(G^+=\langle V_4\times V_4, \Diag(H \times H)\rangle\)  where $H=\Alt(4)$ or $\Sym(4)$. Then \(D(G)=3\).
\end{proposition}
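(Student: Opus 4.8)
The plan is to prove the two inequalities $D(G)\geqslant 3$ and $D(G)\leqslant 3$ separately, using the description of $G$ from Proposition~\ref{group-cases}: we are in cases (h) or (i), where $G^+=\langle V_4\times V_4,\Diag(H\times H)\rangle$ consists exactly of the pairs $(g,g')\in H\times H$ lying in a common coset of $V_4$ (so that $gV_4=g'V_4$), and $G=G^+\rtimes\langle\tau\rangle$ or $G^+\rtimes\langle((1,2),(1,2))\tau\rangle$, the nonidentity coset being the bipartition-swapping elements. The guiding principle is that three colours are too few to rigidify a single bipart on its own (for $H=\Sym(4)$ one has $D(\Sym(4))=4$ on $\Delta$), and that $V_4$ acts \emph{regularly} on each bipart; so both bounds must exploit the coupling between the colourings of $\Delta$ and of $\Delta'$ imposed by the $V_4$-coset condition and by the swap.

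For the lower bound I would show that every partition $\Sigma=\{\Sigma_1,\Sigma_2\}$ into at most two parts has $(G^+)_{(\Sigma)}\neq 1$, which suffices since $(G^+)_{(\Sigma)}\leqslant G_{(\Sigma)}$. Because $V_4$ is regular on $\Delta$, the setwise stabiliser in $V_4$ of a subset of $\Delta$ is nontrivial precisely when that subset has even size; hence if $|\Sigma_1\cap\Delta|$ is even there is $w\neq\id$ in $V_4$ with $(w,\id)\in V_4\times V_4\leqslant G^+$ fixing $\Sigma$, and symmetrically for $\Delta'$. So I may assume $|\Sigma_1\cap\Delta|,|\Sigma_1\cap\Delta'|\in\{1,3\}$, and after possibly swapping the two colours, $\Sigma_1\cap\Delta=\{v_i\}$. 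In both subcases $|\Sigma_1\cap\Delta'|\in\{1,3\}$ there is a distinguished vertex $u_j$, and any element of $(G^+)_{(\Sigma)}$ must fix $v_i$ and $u_j$. It then remains to produce a nontrivial $(g_1,g_2)\in G^+$ fixing $v_i$ and $u_j$: for $H=\Sym(4)$ take $g_1=g_2$ a transposition fixing both $i$ and $j$ (the diagonal lies in $G^+$); for $H=\Alt(4)$ with $i=j$ take $g_1=g_2$ a $3$-cycle fixing $i$; and for $H=\Alt(4)$ with $i\neq j$ the two-point stabiliser in $\Alt(4)$ is trivial, so one needs an \emph{off-diagonal} pair such as $((2,3,4),(1,4,3))$ for $(i,j)=(1,2)$, whose entries fix $i$ and $j$ and lie in a common $V_4$-coset (a one-line check, carried to all $i\neq j$ by conjugating with $\Diag(\Sym(4))$, which normalises $G^+$).

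For the upper bound I would exhibit the explicit $3$-partition $P_1=\{v_1,v_2,u_1\}$, $P_2=\{v_3,u_2,u_3\}$, $P_3=\{v_4,u_4\}$ and check $G_{(\mathcal P)}=1$. The decisive feature is that the three colours have profile $(2,1,1)$ over $\Delta$ but $(1,2,1)$ over $\Delta'$: since every element of $G\setminus G^+$ interchanges $\Delta$ and $\Delta'$, such an element preserves the colouring only if $|P_c\cap\Delta|=|P_c\cap\Delta'|$ for each colour $c$, which fails for $P_1$. This single observation eliminates \emph{all} bipartition-swapping elements at once, uniformly for both choices of $G$ in cases (h) and (i). For $(g,g')\in G^+$ it remains to note that the stabilisers of the two bipart-colourings are $\langle(1,2)\rangle$ on $\Delta$ and $\langle(2,3)\rangle$ on $\Delta'$, which meet $V_4$ trivially and lie in different $V_4$-cosets, so the coset condition $gV_4=g'V_4$ forces $(g,g')=(\id,\id)$ (and for $H=\Alt(4)$ both stabilisers already vanish).

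I expect the main obstacle to be the upper bound, specifically the bipartition-swapping elements: any colouring that treats the two biparts symmetrically is invariably preserved by some $(g,g')\tau$, so the construction must deliberately break the $\Delta$--$\Delta'$ symmetry through unequal colour counts; once that asymmetry is built in, the swap-elements vanish for free and only the easy $G^+$-analysis remains. Since $\Gamma$, $G$ and the number of colours are fixed and small, every step above is a finite verification, so the proposition can alternatively be confirmed by a short {\sc Magma} computation, exactly as in Proposition~\ref{prop:DGdiag-outer}.
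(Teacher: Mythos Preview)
Your proposal is correct and follows essentially the same approach as the paper: the upper bound uses the identical partition $P_1=\{v_1,v_2,u_1\}$, $P_2=\{v_3,u_2,u_3\}$, $P_3=\{v_4,u_4\}$ with the same $|P_1\cap\Delta|\neq|P_1\cap\Delta'|$ observation to kill the swap, and the lower bound is the same case analysis on $|\Sigma_1\cap\Delta|$, only you phrase the odd case around the singleton $v_i$ (and split on $H$) whereas the paper phrases it around the size-$3$ set and produces elements of the smaller $G^+$ that work uniformly for both $H$.
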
 
\begin{proof}

Assume that there exists a distinguishing partition $ \Sigma = \{\Sigma_1,\Sigma_2\}$ of size $2$.
If $|\Sigma_1\cap \Delta|=4$, then $g=((1,2)(3,4),\mathrm{id})\in G_{(\Sigma)}$, a contradiction.

If $|\Sigma_1\cap \Delta|=2$, say $\Sigma_1\cap \Delta=\{v_i,v_j\}$. Then $\Sigma_2\cap \Delta=\{v_k,v_\ell\}$ where $\{k,\ell\}=\{1,2,3,4\}\setminus \{i,j\}$. Thus $g=((i,j)(k,\ell),\mathrm{id})\in G_{(\Sigma)}$, a contradiction.

The same argument holds for $\Delta'$. Thus (after swapping $\Sigma_1$ and $\Sigma_2$ if necessary) we may assume that $|\Sigma_1\cap \Delta|=3$ and $|\Sigma_2\cap \Delta| =1$.
Say  $\Sigma_1\cap \Delta=\{v_i,v_j,v_k\}$. Suppose  $|\Sigma_1\cap \Delta'|=3$. If $\Sigma_1\cap \Delta'=\{u_i,u_j,u_k\}$,  $g=((i,j,k),(i,j,k))\in G_{(\Sigma)}$, a contradiction. Thus up to relabelling $\Sigma_1\cap \Delta'=\{u_i,u_j,u_\ell\}$. Now 
$g=((i,j,k),(i,\ell,j))=((i,j,k),(i,j,k))(\mathrm{id},(i,k)(j,\ell))\in G_{(\Sigma)}$, a contradiction.

Thus $|\Sigma_1\cap \Delta'|=1$. If $\Sigma_1\cap \Delta'=\{u_\ell\}$,  $g=((i,j,k),(i,j,k))\in G_{(\Sigma)}$, a contradiction. 
 Thus up to relabelling $\Sigma_1\cap \Delta'=\{u_i\}$. Now 
$g=((i,j,k),(j,\ell,k))=((i,j,k),(i,j,k))(\mathrm{id},(i,j)(k,\ell))\in G_{(\Sigma)}$, a contradiction.
This final contradiction shows that $\Sigma$ cannot be a distinguishing partition.

Let $\mathcal P = \{P_1,P_2,P_{3}\}$, where $P_1=\{v_1,v_2,u_1\}$, $P_2=\{v_3,u_2,u_3\}$, $P_3=\{v_4,u_4\}$.
This is obviously a partition of $V\Gamma$.

Assume \(g\in G_{(\mathcal{P})}\). Then $g\in G^+$ since $|P_1\cap \Delta|\neq |P_1\cap \Delta'|$. 
Moreover $g$ fixes $v_3=P_2\cap \Delta$, $v_4=P_4\cap \Delta$, $u_1=P_1\cap \Delta'$, $u_4=P_3\cap \Delta'$.
Note that $g^\Delta$ and $g^{\Delta'}$ have the same parity by definition of $G^+$, so $g\in\langle ((1,2),(2,3))\rangle$, but $((1,2),(2,3))\notin G^+$. Therefore  $g=(\mathrm{id},\mathrm{id}),$ and  $G_{(\mathcal{P})}$ is trivial.  Thus $\mathcal{P}$ is a distinguishing partition of size $3$, and so $D(G)=3.$

\end{proof}

\section{Crown graphs}

In this section we compute the distinguishing numbers of some families of groups that act $2$-arc-transitively on a crown graph $\Gamma=K_{n,n}-nK_2$, that is  cases (d)--(e) of Proposition \ref{group-cases}. 
Note the non-edges of this graph are the pairs $\{v_i,u_i\}$, since both $\tau$ and $((1,2),(1,2))\tau$ preserve this set of pairs.

We have  $G^+=  \Diag(H\times H)$ where $H=\Alt(n)$ or $H=\Sym(n)$. 

\medskip
To display distinguishing partitions we will use the following  sets of vertices.
For \(n, i, q \in \mathbb{N}\) we define 
 \begin{equation*}
\mathcal{V}_n(i,q)\coloneqq\{v_{j}: (i-1)\,q < j \leqslant \min(iq,n)\}
 \end{equation*}
and
 \begin{equation*}
\mathcal{U}_n(i,q)\coloneqq\{u_{j}: j \equiv i \;(\bmod\; q),\; j\leqslant n\}.
 \end{equation*}
 We use \(\mathcal{V}_n(i, q)\) to partition \(\Delta\) by sequentially inserting up to \(q\) consecutive points into a block and \(\mathcal{U}_n(i, q)\) to partition \(\Delta'\) by sequentially inserting points at ``distance \(q\)" into a block.


The following lemma is easy to check.

\begin{lemma}
\label{V is partition}
Let \(n,q \in \N\) with $q\leqslant n$.
\begin{enumerate}
    \item[(a)] For $1\leqslant i\leqslant q$, $\mathcal{U}_n(i,q)$ is non-empty, and \(
    \{\mathcal{U}_n(i, q) : 1 \leqslant i \leqslant q\}
\)
is a partition of $\Delta'$.
\item[(b)]  For any $i$ such that \((i-1)q < n\), $\mathcal{V}_n(i,q)$ is non-empty, and 
\(
    \{\mathcal{V}_n(i, q) : 1 \leqslant i \leqslant d\}
\)
is a partition of $\Delta$ if \((d-1)q<n\leqslant d \cdot q \).
\end{enumerate}
\end{lemma}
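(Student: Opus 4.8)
The plan is to verify, for each part, the three defining properties of a partition: every set in the family is non-empty, the sets are pairwise disjoint, and their union is the whole ground set ($\Delta'$ in (a), $\Delta$ in (b)). Both claims become transparent once one recognises the combinatorial structure behind the definitions: the sets $\mathcal{U}_n(i,q)$ are the intersections with $\{u_1,\ldots,u_n\}$ of the residue classes modulo $q$, while the sets $\mathcal{V}_n(i,q)$ are the blocks of consecutive indices obtained by cutting $\{1,\ldots,n\}$ into intervals of length $q$ (the last one possibly short).

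For part (a), I would first note that as $i$ ranges over $\{1,\ldots,q\}$ the condition $j \equiv i \pmod{q}$ ranges over all residue classes modulo $q$ (the value $i=q$ corresponding to the class of multiples of $q$). Non-emptiness is immediate: since $i \leqslant q \leqslant n$, the index $j=i$ satisfies $j \equiv i \pmod{q}$ and $j \leqslant n$, so $u_i \in \mathcal{U}_n(i,q)$. Disjointness follows because if $u_j$ lay in both $\mathcal{U}_n(i,q)$ and $\mathcal{U}_n(i',q)$, then $i \equiv j \equiv i' \pmod{q}$, and since $1 \leqslant i, i' \leqslant q$ this forces $i = i'$. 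Finally, for covering, given any $u_j$ with $1 \leqslant j \leqslant n$ there is a unique representative $i \in \{1,\ldots,q\}$ with $j \equiv i \pmod{q}$, and then $u_j \in \mathcal{U}_n(i,q)$.

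For part (b), I would identify $\mathcal{V}_n(i,q)$ with the index interval $\bigl((i-1)q,\,\min(iq,n)\bigr]$. Non-emptiness for each $i$ with $(i-1)q < n$ holds because the least candidate index $(i-1)q+1$ then satisfies $(i-1)q+1 \leqslant \min(iq,n)$; in particular this applies to every $i \in \{1,\ldots,d\}$, since the hypothesis $(d-1)q < n$ gives $(i-1)q \leqslant (d-1)q < n$. Disjointness is clear, since the half-open intervals $\bigl((i-1)q,\,iq\bigr]$ are pairwise disjoint and intersecting with $\{1,\ldots,n\}$ preserves this. For covering, any $j$ with $1 \leqslant j \leqslant n$ lies in exactly one such interval, namely the one for $i = \lceil j/q \rceil$; the bound $n \leqslant dq$ then ensures $i \leqslant d$, so $v_j \in \mathcal{V}_n(i,q)$ with $1 \leqslant i \leqslant d$.

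There is no substantial obstacle here — the statement is genuinely routine — so the only points requiring care are the two boundary cases. In (a) one must remember that the index $i=q$ encodes the residue class $0$ rather than $q$, and in (b) one must check that the two inequalities in the hypothesis $(d-1)q < n \leqslant dq$ guarantee, respectively, the non-emptiness of the last block $\mathcal{V}_n(d,q)$ and that no index forces $i$ to exceed $d$. Tracking how each of these inequalities is used is essentially the whole content of the argument.
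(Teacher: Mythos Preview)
Your proof is correct and complete. The paper itself omits the proof entirely, stating only that the lemma ``is easy to check''; your argument supplies exactly the routine verification the authors had in mind, with appropriate attention to the boundary cases (the residue class $i=q$ in (a) and the two inequalities governing the last block in (b)).
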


\begin{lemma}\label{lem:partsaresubsets}  Let \(n,q \in \N\) with $q\leqslant n$. Let $\Gamma=K_{n,n}-nK_2$ and assume  $G^+= \Diag(H\times H)$ where $H\leq\Sym(n)$. 

Suppose \(\mathcal{P}:=\{P_{1},\ldots, P_{m}\}\) is a partition of the vertex-set of $\Gamma$ 
such that for each $i$
\[P_i\cap\Delta\text{ is a subset of }\mathcal{V}_n(a, q)\text{ for some }a, \]
\[P_i\cap\Delta'\text{ is a subset of }\mathcal{U}_n(b, q)\text{ for some }b. \]
Then $G^+_{(\mathcal{P})}$ is trivial.    
\end{lemma}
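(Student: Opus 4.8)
The plan is to reduce the claim to a purely arithmetic statement about the indices $\{1,\dots,n\}$. Since $G^+ = \Diag(H\times H)$, every element has the form $g=(h,h)$ with $h\leqslant \Sym(n)$, acting on $\Delta$ and on $\Delta'$ by the \emph{same} index permutation $h$. As $g\in G^+$ preserves each of $\Delta$ and $\Delta'$, requiring $g$ to fix a part $P_i$ setwise is the same as requiring it to fix both $P_i\cap\Delta$ and $P_i\cap\Delta'$ setwise. So I would first record that $g\in G^+_{(\mathcal P)}$ forces $h$ to stabilise setwise every set $A_i := \{j : v_j\in P_i\}$ and every set $B_i := \{j : u_j\in P_i\}$.

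Next I would translate the two hypotheses into constraints on these index sets. Because $\mathcal P$ partitions all of $V\Gamma$, each index $j$ lies in exactly one $A_i$ (from the vertex $v_j$) and in exactly one $B_{i'}$ (from $u_j$). The assumption that $P_i\cap\Delta\subseteq\mathcal{V}_n(a,q)$ says precisely that $A_i$ is contained in a block of at most $q$ \emph{consecutive} integers, namely the index set $\{(a-1)q<j\leqslant\min(aq,n)\}$; such a block has diameter at most $q-1$. The assumption that $P_i\cap\Delta'\subseteq\mathcal{U}_n(b,q)$ says that $B_i$ is contained in a single residue class modulo $q$.

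The key step is to combine these two complementary constraints at a fixed index $j$. Let $g=(h,h)\in G^+_{(\mathcal P)}$ and say $j\in A_i$ and $j\in B_{i'}$. Since $h$ stabilises $A_i$ setwise, $j^h$ also lies in $A_i$, so $j$ and $j^h$ belong to the same block of consecutive integers and hence $|j^h-j|\leqslant q-1<q$. Since $h$ stabilises $B_{i'}$ setwise, $j^h$ also lies in $B_{i'}$, which sits inside one residue class mod $q$, so $j^h\equiv j\pmod q$. Thus $q\mid (j^h-j)$ while $|j^h-j|<q$, forcing $j^h=j$. As $j$ was arbitrary, $h=\id$, whence $g=1$ and $G^+_{(\mathcal P)}$ is trivial.

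The argument is short once the two constraints are isolated, and there is no serious obstacle; the only points needing care are checking that setwise-stabilising a part genuinely places $j^h$ into the \emph{same} $\mathcal{V}$-block and the same $\mathcal{U}$-residue class as $j$ (so that both the distance bound and the congruence apply simultaneously), together with the elementary fact that a run of at most $q$ consecutive integers has diameter at most $q-1$. Note that the crown-graph structure and the edge set play no role: the statement depends only on the diagonal action on the vertices.
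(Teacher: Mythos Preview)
Your argument is correct and is essentially the same as the paper's proof: both write $g=(h,h)$, use the $\Delta$-condition to obtain $|s-s^h|<q$ from the $\mathcal{V}_n$-block, use the $\Delta'$-condition to obtain $s\equiv s^h \pmod q$ from the $\mathcal{U}_n$-residue class, and conclude $s^h=s$. The only difference is cosmetic (you name the index sets $A_i,B_i$ explicitly); also note the typo ``$h\leqslant\Sym(n)$'' should read ``$h\in H$''.
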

\begin{proof}
Let $g\in G^+_{(\mathcal{P})}$.
Then $g=(h,h)$ for some $h\in \Sym(n)$. 
  Let $1\leqslant s\leqslant n$.  Since \(g\in G_{(\mathcal{P})}\), $v_s^g=v_{s^h}$ and $v_s$ are in the same part and hence in the same set $\mathcal{V}_n(a, q)$. In particular $ |s-s^h|<q$. On the other hand, $u_s^g=u_{s^h}$ and $u_s$ are also in the same part and hence in the same set $\mathcal{U}_n(b, q)$. Therefore $s\equiv s^h \;\pmod q$. It follows that $s^h=s$, and so $h=\mathrm{id}$, and  $g=(\mathrm{id},\mathrm{id}).$
  Therefore $G^+_{(\mathcal{P})}$ is trivial.   
  \end{proof}


\begin{proposition}\label{prop:DGdiag-Sym}
    Let \(\Gamma:=K_{n,n}-n K_2\), where $n\geqslant 3$, and assume that $G^+=\Diag(\Sym(n)\times \Sym(n))$.
    Then \(D(G)=\lfloor \sqrt{n}\rfloor+1\).
\end{proposition}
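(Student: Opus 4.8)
The plan is to exploit the very explicit description $G=\Diag(\Sym(n)\times\Sym(n))\rtimes\langle\tau\rangle$ coming from case (d) of Proposition~\ref{group-cases}: every element of $G$ is either $(h,h)$ or $(h,h)\tau$ with $h\in\Sym(n)$. Given a $k$-colouring with colour function $c$ on $V\Gamma$, I would encode it by the \emph{label map} $L\colon\{1,\dots,n\}\to\{1,\dots,k\}^2$, $L(j)=(c(v_j),c(u_j))$. Two observations drive everything. First, $(h,h)$ sends $v_j\mapsto v_{j^h}$ and $u_j\mapsto u_{j^h}$, so it preserves the colouring exactly when $h$ permutes the fibres of $L$; since the acting group on each side is all of $\Sym(n)$, the diagonal setwise stabiliser is trivial \emph{iff} $L$ is injective (a repeated label $L(j)=L(j')$ yields the colour-preserving element $((j\,j'),(j\,j'))$). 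Second, $(h,h)\tau$ interchanges $\Delta$ and $\Delta'$, so if it fixes a part $P$ then it restricts to a bijection $P\cap\Delta\to P\cap\Delta'$, forcing $|P\cap\Delta|=|P\cap\Delta'|$; and, when $L$ is injective, $(h,h)\tau$ preserves the whole colouring precisely when the image of $L$ is closed under the coordinate swap $(a,b)\mapsto(b,a)$, the permutation $h$ being then determined by $L(j^h)=(c(u_j),c(v_j))$.

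For the \textbf{lower bound}, injectivity of $L$ gives $n\le k^2$, hence $k\ge\lceil\sqrt n\rceil$. If $n$ is not a perfect square this is already $\lfloor\sqrt n\rfloor+1$. If $n=m^2$, I would argue by contradiction that $k=m$ is impossible: injectivity then forces $L$ to be a \emph{bijection} onto $\{1,\dots,m\}^2$, whose image is trivially swap-closed, so the second observation manufactures a nontrivial element $(h,h)\tau\in G_{(\Sigma)}$. Thus $k\ge m+1=\lfloor\sqrt n\rfloor+1$ in every case.

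For the \textbf{upper bound} I would build a distinguishing partition with exactly $Q:=\lfloor\sqrt n\rfloor+1$ parts. Since $Q>\sqrt n$ we have $n<Q^2$, so $\lceil n/Q\rceil\le Q$; by Lemma~\ref{V is partition} the sets $\mathcal{V}_n(\,\cdot\,,Q)$ give at most $Q$ blocks of $\Delta$ and the sets $\mathcal{U}_n(\,\cdot\,,Q)$ give exactly $Q$ blocks of $\Delta'$. I pair these into $Q$ parts, each consisting of (at most) one $\mathcal{V}$-block together with one $\mathcal{U}$-block. Lemma~\ref{lem:partsaresubsets} then gives $G^+_{(\mathcal P)}=1$ for free. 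To kill the remaining elements $(h,h)\tau$ it suffices, by the second observation, to choose the pairing so that some part $P_i$ has $|P_i\cap\Delta|\ne|P_i\cap\Delta'|$. Such a pairing always exists because the $\mathcal{V}$-block sizes are not all equal to the $\mathcal{U}$-block sizes: when there are fewer than $Q$ blocks $\mathcal{V}_n(\,\cdot\,,Q)$ one part has empty $\Delta$-side but nonempty $\Delta'$-side, and when there are exactly $Q$ of them the last $\mathcal{V}$-block has size strictly less than $Q$ (as $n<Q^2$), so one can deliberately mispair a full block with a smaller one. This produces a distinguishing partition of size $Q$, whence $D(G)\le\lfloor\sqrt n\rfloor+1$, and combined with the lower bound $D(G)=\lfloor\sqrt n\rfloor+1$.

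The conceptual heart --- and the reason the answer is $\lfloor\sqrt n\rfloor+1$ rather than $\lceil\sqrt n\rceil$ --- is the perfect-square case of the lower bound, where the ``transpose'' symmetry $(h,h)\tau$ becomes unavoidable once all $m^2$ labels must be used. The only genuinely fiddly point is the swap-killing in the upper bound for $n$ just below $Q^2$, where the two families of block sizes can coincide as multisets and one must deliberately create a size imbalance; I expect this bookkeeping to be the main obstacle. I note that one could bypass the $\mathcal{V}/\mathcal{U}$ machinery entirely and simply choose any injective $L$ with image not swap-closed (possible since $n<Q^2$ leaves a spare label to omit), which delivers the upper bound directly from the two observations.
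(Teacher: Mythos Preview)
Your proof is correct and follows essentially the same route as the paper. The lower bound via injectivity of the label map $L$ is exactly the paper's Claim~\ref{u_idiffparts}, and your square-case argument (bijectivity of $L$ forces a swap-closed image, hence a nontrivial $(h,h)\tau$) is precisely the paper's bijection $\alpha$ and the construction of $g=(h,h)\tau$. For the upper bound you use the same $\mathcal{V}/\mathcal{U}$-block machinery together with Lemma~\ref{lem:partsaresubsets}, only with the single block parameter $Q=\lfloor\sqrt n\rfloor+1$ throughout rather than the paper's case split between parameters $\ell-1$ and $\ell$; your ``mispairing'' to create a size imbalance plays the same role as the paper's choice of an asymmetric part in each case. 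Your closing remark---that one can bypass the block construction entirely by choosing any injective $L\colon\{1,\dots,n\}\to\{1,\dots,Q\}^2$ whose image omits one half of an off-diagonal pair, which is possible since $n<Q^2$---is a genuinely tidier alternative that the paper does not give.
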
 
\begin{proof}
 
Let \(k:=D(G)\) and $\Sigma:=\{\Sigma_1 , \Sigma_2, \ldots,\Sigma_k\}$  be a distinguishing  partition of \(V\Gamma\), that is, \(G_{(\Sigma)}=\bigcap_{i=1}^{k}G_{\Sigma_{i}}=1\).
\begin{claim}\label{u_idiffparts}
For any \(1\leqslant i\leqslant  k\), any two vertices of \(\{u_j \mid v_j\in \Sigma_i\}\) lie in two different parts of \(\Sigma\).
\end{claim}
Indeed, otherwise, there would exist \(\Sigma_{i'}\in\Sigma\) and \(u_s, u_t\in \Sigma_{i'}\) with \(v_s,v_t\in \Sigma_i\), and the map \(\pi\coloneqq ((s,t),(s,t))\in G^+\) stabilises each element of  \(\Sigma\).
Thus \(\pi\in G_{(\Sigma)}\), which is a contradiction, proving the claim.

By Claim \ref{u_idiffparts}, it follows that \(|\Sigma_{i}\cap \Delta|\leqslant k\) for all \(i\).
Thus \(n=|\Delta|=\sum_{i=1}^{k}|\Sigma_{i}\cap\Delta|\leqslant k^{2}\).
Note that \(k\) is an integer, which implies that \(k\geqslant \lceil\sqrt{n}\,\rceil.\)

First we deal with the case where \(n\) is not a square. 
Let \(\ell:=\lceil\sqrt{n}\,\rceil=\lfloor \sqrt{n}\rfloor+1\).
Then \(\ell-1\) is the largest integer such that its square is strictly less than \(n\),
thus \((\ell-1)^{2} < n < \ell^2\).
In particular, \(n=(\ell-1)^{2}+p\), where \(1 \leqslant p \leqslant 2\ell-2\).  We show that \(k\leqslant \ell\) by displaying a partition \(\mathcal{P}\) of size \(\ell\) with trivial stabiliser \(G_{(\mathcal{P})}\). This in turn will show that \(k=\ell\) when \(n\) is not a square.
We split the analysis into two cases: (i) \(1 \leqslant p< \ell\); (ii) \(\ell \leqslant p \leqslant 2\ell-2\).

 \medskip\noindent
\emph{Case (i):  \(1 \leqslant p< \ell\)}. \\Let \(\mathcal{P}:=\{P_{1},\ldots, P_{\ell}\}\) where
\begin{equation*} 
P_{i}  \coloneqq
\begin{cases}
   \mathcal{V}_n(i,\ell-1) \cup \mathcal{U}_n(i,\ell-1),& \text{if } 1\leqslant i < \ell;\\ 
   \mathcal{V}_n(i,\ell-1), & \text{if } i=\ell.
\end{cases}
\end{equation*}

This is a partition of size $\ell$ of \(V\Gamma\),
since
\(
\{P_i \cap \Delta' : 1 \leq i \leq \ell\}
= \{\mathcal{U}_n(i, \ell-1) : 1 \leq i \leq \ell - 1\} \cup \{\varnothing\}
\)
is a partition of $\Delta'$ by Lemma \ref{V is partition}(a)
and \(
\{P_i \cap \Delta : 1 \leq i \leq \ell\}
= \{\mathcal{V}_n(i, \ell-1) : 1 \leq i \leq \ell\}
\)
is a partition of $\Delta$
by applying Lemma \ref{V is partition}(b) with $d = \ell$ and $q = \ell - 1$, as $(\ell-1)^2 < n = (\ell-1)^{2} + p \leqslant  \ell \, (\ell - 1).$



Note that \(P_{\ell}\subseteq\Delta\) and so $G_{(\mathcal{P})}=G^+_{(\mathcal{P})}$. 
Observe that \(\mathcal{P}\) satisfies the conditions of Lemma \ref{lem:partsaresubsets}. It follows that $G^+_{(\mathcal{P})}$ is trivial.
Thus $\mathcal{P}$ is a distinguishing partition for $G$.
%
 

\medskip\noindent
\emph{Case (ii): \(\ell\leqslant p\leqslant 2\ell-2\)}.\\ Let \(\mathcal{P}:=\{P_{1},\ldots, P_{\ell}\}\) where
\begin{equation*}
    P_{i}:=\begin{cases}
       \mathcal{V}_n(i,\ell)\cup \mathcal{U}_n(i,\ell)&\text{if }1\leqslant i\leqslant \ell-2\\
        
        \mathcal{V}_n(i,\ell)\cup  \mathcal{U}_n(\ell,\ell)&\text{if } i = \ell-1\\
        
        \mathcal{V}_n(i,\ell)\cup  \mathcal{U}_n(\ell-1,\ell)&\text{if }i=\ell.
    \end{cases}
\end{equation*}

Arguing similarly to the previous case, we see that this is a partition of  \(V\Gamma\) of size $\ell$,
by applying Lemma \ref{V is partition}(b) with $d = \ell$ and $q = \ell$, as $(\ell-1)\ell<n=(\ell-1)^{2}+p\leqslant \ell^2.$

Note that \(|P_{\ell - 1}\cap\Delta'|=\ell-1<\ell=|P_{\ell - 1}\cap\Delta|\),
and so $G_{(\mathcal{P})}=G^+_{(\mathcal{P})}$. 
Observe that \(\mathcal{P}\) satisfies the conditions of Lemma \ref{lem:partsaresubsets}. It follows that $G^+_{(\mathcal{P})}$ is trivial.
Thus $\mathcal{P}$ is a distinguishing partition for $G$.

%


From these partitions of size \(\ell\)  we see that \(D(G)=\lfloor \sqrt{n}\rfloor+1\) when \(n\) is not a square.
Thus it remains to show that the result also holds for \(n=\ell^{2}\) a square.
Note that \(k^{2}\geqslant n\), and so \(k\geqslant \ell\). 

\medskip
First assume that \(k=\ell\).
Recall that \(|\Sigma_{i}\cap\Delta|\leqslant k=\ell\)
and \(\sum_{i=1}^{\ell}|\Sigma_{i}\cap\Delta|=n=\ell^{2}\).
Hence \(|\Sigma_{i}\cap \Delta|=\ell\) for \(1\leqslant i\leq\ell\). 
Similarly one can show that \(|\Sigma_{i}\cap\Delta'|=\ell\) for \(1\leqslant i\leqslant \ell\).
By Claim \ref{u_idiffparts}, for any \(i\), any two vertices of \(\{u_j|v_j\in \Sigma_i\}\) lie in two different parts of \(\Sigma\), so \(\{u_j \mid v_j\in \Sigma_i\}\) has exactly one vertex from each element of \(\Sigma\).
%
%
Therefore the map $\alpha:\{1,2,\ldots,\ell^2\}\mapsto \{1,2,\ldots,\ell\}^2$ defined by $\alpha(i)=(x,y)$ if $v_i\in \Sigma_x$ and $u_i\in \Sigma_y$, is a bijection.
Let $h$ be the element of $\Sym(n)$, defined by $\alpha(i^h)=(y,x)$ where $\alpha(i)=(x,y)$. Clearly $h$ is an involution.
 Let \(g:=(h,h)\tau\in G\), so that  $v_i^g=(v_{i^h})^\tau=u_{i^h}$ and $u_i^g=(u_{i^h})^\tau=v_{i^h}$. It follows that $g$ is an involution.

 We claim that $g\in G_{({\Sigma})}.$
Let $v_i\in\Sigma_x\cap \Delta$, thus $\alpha(i)=(x,y)$ for some $y$, and so $\alpha(i^h)=(y,x)$, and so $v_i^g=u_{i^h}\in \Sigma_x\cap\Delta'$.
Therefore \((\Sigma_{x}\cap\Delta)^{g}=\Sigma_{x}\cap\Delta'\).
Since \(g \) is an involution we also have that \((\Sigma_{x}\cap\Delta')^{g}=\Sigma_{x}\cap\Delta\).
This shows $\Sigma$ is not a distinguishing partition, a contradiction.  Thus
\(k\geqslant \ell+1\).




We will now construct a distinguishing partition with \(\ell+1\) parts, which shows that \(D(G)=\ell+1=\sqrt{n}+1=\lfloor \sqrt{n}\rfloor +1\) when \(n\) is a square.
Consider \(\mathcal{P}:=\{P_{1},\ldots, P_{\ell+1}\}\)  whereby 
\begin{equation}
P_{i}:=
\begin{cases}
    \mathcal{V}_n(i,\ell)\cup\mathcal{U}_n(i,\ell) \setminus \{u_{\ell^{2}}\} & \text{if } 1\leqslant i\leqslant \ell\\
    \{u_{\ell^{2}}\} & \text{if } i=\ell+1.
\end{cases}
\end{equation}
This is clearly a partition of size $\ell+1$ of \(V\Gamma\),
by applying Lemma \ref{V is partition}.


Note that  \(P_{\ell+1}\subseteq \Delta'\),
and so $G_{(\mathcal{P})}=G^+_{(\mathcal{P})}$. 
Observe that \(\mathcal{P}\) satisfies the conditions of Lemma \ref{lem:partsaresubsets}. It follows that $G^+_{(\mathcal{P})}$ is trivial.
Thus $\mathcal{P}$ is a distinguishing partition for $G$.
\end{proof}

\begin{proposition}\label{prop:DGdiag-Alt}
Let \(\Gamma:=K_{n,n}-n K_2\), where $n\geqslant3$, and assume $G^+=\Diag(\Alt(n)\times \Alt(n)).$
Then \(D(G)=\lceil \sqrt{n - 1}\rceil\).
\end{proposition}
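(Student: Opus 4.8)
The plan is to adapt the strategy of Proposition~\ref{prop:DGdiag-Sym} to the alternating case, where the key new subtlety is that a single transposition $((s,t),(s,t))$ no longer lies in $G^+=\Diag(\Alt(n)\times\Alt(n))$, so the analogue of Claim~\ref{u_idiffparts} must be rephrased. Writing $k:=D(G)$ and letting $\Sigma=\{\Sigma_1,\dots,\Sigma_k\}$ be a distinguishing partition, I would first establish the revised lower-bound obstruction: the only way to force a non-identity element of $G^+$ is through a product of two transpositions, i.e.\ an element $((s,t)(s',t'),(s,t)(s',t'))$ lying in $G^+$. So if there exist indices with $v_s,v_t,v_{s'},v_{t'}$ and their $u$-counterparts distributed so that $u_s,u_t$ share a part while $v_s,v_t$ share a part, \emph{and simultaneously} $u_{s'},u_{t'}$ share a part while $v_{s'},v_{t'}$ share a part, one obtains a non-trivial stabilising element. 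This means that at most one ``coincidence'' of the forbidden type from Claim~\ref{u_idiffparts} is now permitted, loosening the counting constraint from $n\leqslant k^2$ to roughly $n\leqslant k^2+1$. Carefully, I would argue that for all but possibly one pair, the vertices $\{u_j : v_j\in\Sigma_i\}$ still lie in distinct parts, yielding $n=|\Delta|\leqslant k^2+1$ and hence $k\geqslant\lceil\sqrt{n-1}\rceil$.

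For the upper bound I would reuse the machinery of Lemma~\ref{lem:partsaresubsets}, but I must account for the fact that $G=G^+\rtimes\langle\tau\rangle$ or $G^+\rtimes\langle((1,2),(1,2))\tau\rangle$, so a distinguishing partition for $G^+$ need not be distinguishing for $G$ unless I also rule out the $\tau$-type elements. The cleanest route is to design the partition $\mathcal{P}$ so that some part is entirely contained in $\Delta$ (or entirely in $\Delta'$), or more generally so that the multiset of sizes $\{|P_i\cap\Delta|\}$ differs from $\{|P_i\cap\Delta'|\}$; this forces $G_{(\mathcal{P})}=G^+_{(\mathcal{P})}$, after which Lemma~\ref{lem:partsaresubsets} gives triviality. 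I would set $\ell:=\lceil\sqrt{n-1}\rceil$ and attempt to pack $\Delta$ and $\Delta'$ into $\ell$ parts using the $\mathcal{V}_n(i,q)$ and $\mathcal{U}_n(i,q)$ blocks with $q=\ell$, exploiting the one extra unit of slack ($n\leqslant\ell^2+1$) that the alternating constraint permits. The delicate point is that, unlike the symmetric case, I can afford exactly one part that contains two ``matched'' $u$-vertices, so I would isolate a single offending index (for example by placing $v_n$ and $u_n$ into a dedicated configuration) and verify directly that no product-of-two-transpositions element survives.

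I anticipate the main obstacle being the boundary bookkeeping when $n-1$ is a perfect square versus not, since this is exactly where the floor/ceiling interacts with whether $\ell$ parts suffice or whether the packing is forced into a symmetric configuration that admits a $\tau$-type automorphism. In the symmetric proof the square case $n=\ell^2$ required bumping $k$ up by one via the bijection-and-involution argument producing $g=(h,h)\tau\in G_{(\Sigma)}$; here the analogous tight case should instead occur at $n=\ell^2+1$, and I would need to show that the extra vertex can always be placed to break both the $\Sym$-type and the $\tau$-type symmetries at once, so that no upward correction is needed and $\lceil\sqrt{n-1}\rceil$ parts genuinely suffice. I would therefore treat the cases according to the residue of $n-1$ modulo the block size, construct explicit partitions mirroring Cases (i) and (ii) of Proposition~\ref{prop:DGdiag-Sym}, and in each case check (a) that $\mathcal{P}$ is a genuine partition via Lemma~\ref{V is partition}, (b) that asymmetry of the $\Delta$/$\Delta'$ intersection sizes kills the $\tau$-part, and (c) that the hypotheses of Lemma~\ref{lem:partsaresubsets} hold so $G^+_{(\mathcal{P})}=1$.
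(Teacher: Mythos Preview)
Your lower-bound argument is essentially the paper's: at most one ``coincidence'' (a pair $v_s,v_t$ in one part with $u_s,u_t$ in one part) is allowed, since two such coincidences or a threefold coincidence produce an even element of $G^+$ stabilising $\Sigma$; this yields $n\leqslant k(k-1)+(k+1)=k^2+1$ and hence $k\geqslant\lceil\sqrt{n-1}\rceil$.

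The upper bound has a genuine gap. Your final plan is to check that the hypotheses of Lemma~\ref{lem:partsaresubsets} hold and conclude $G^+_{(\mathcal P)}=1$. But Lemma~\ref{lem:partsaresubsets} makes no use of parity: it gives $G^+_{(\mathcal P)}=1$ for the full diagonal $\Diag(\Sym(n)\times\Sym(n))$. Combined with a $\tau$-breaking asymmetry, such a $\mathcal P$ would therefore be distinguishing already for the symmetric case, contradicting Proposition~\ref{prop:DGdiag-Sym} whenever $D(G_{\mathrm{Sym}})>\ell$. This happens precisely for $n\in\{\ell^2,\ell^2+1\}$ (where $\lceil\sqrt{n-1}\rceil=\ell$ but $\lfloor\sqrt{n}\rfloor+1=\ell+1$). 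Concretely: for $n=\ell^2+1$ you cannot place the $\ell^2+1$ vertices of $\Delta$ into $\ell$ parts each contained in some $\mathcal V_n(a,\ell)$ of size at most $\ell$; and for $n=\ell^2$ every Lemma~\ref{lem:partsaresubsets}-compliant partition into $\ell$ parts has $|P_i\cap\Delta|=|P_i\cap\Delta'|=\ell$ for all $i$, so your size-asymmetry test for $\tau$ also fails. Your remark that ``the extra vertex can always be placed to break both the $\Sym$-type and the $\tau$-type symmetries at once'' is therefore false in these cases.

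The paper's construction \emph{deliberately violates} Lemma~\ref{lem:partsaresubsets} in exactly one part: it moves a single vertex (e.g.\ $v_n$, or $v_{(\ell-1)^2+1}$) into $P_{\ell-1}$ so that $P_{\ell-1}\cap\Delta$ straddles two consecutive $\mathcal V$-blocks. One then argues directly: for $g=(h,h)\in G^+_{(\mathcal P)}$ the congruence $s\equiv s^h\pmod q$ from the $\mathcal U$-structure still holds, and the $\mathcal V$-structure forces $s^h=s$ for every $s$ except possibly the two indices in $P_{\ell-1}$ sharing a residue class. Hence $h$ is either the identity or a single transposition; being odd, the latter is excluded since $h\in\Alt(n)$. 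This direct use of parity---not Lemma~\ref{lem:partsaresubsets}---is the missing idea in your plan.
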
 

\begin{proof}

Let \(k:=D(G)\) and \(\Sigma=\{\Sigma_{1},\ldots, \Sigma_{k}\}\) be a distinguishing partition of \(V\Gamma\), so that \(G_{(\Sigma)}=\bigcap_{i=1}^{k}G_{\Sigma_{i}}=1\).

\begin{claim}\label{u_iatmosttwo}
For any \(1\leqslant i\leqslant  k\), either
\begin{enumerate}
\item[(a)]  any two vertices of \(\{u_j\mid v_j\in \Sigma_i\}\) lie in two different parts of \(\Sigma\), or 
\item[(b)]  there exist $j_1\neq j_2$ such that $v_{j_1},v_{j_2}\in \Sigma_i$, $u_{j_1}$, $u_{j_2}$ are in the same part, and any two vertices of \(\{u_j \mid v_j\in \Sigma_i, j\neq j_2\}\) lie in two different parts of \(\Sigma\). 
\end{enumerate}
Moreover, at most one $i$ satisfies (b). The same result holds if we swap the $u$'s and the $v$'s.
\end{claim}
Let \(1\leqslant i\leqslant  k\) and assume (a) does not hold. 
Thus there are vertices of \(\{u_j|v_j\in \Sigma_i\}\) that are in
the same part $\Sigma_{i'}$ of \(\Sigma\), say $u_{j_1}$, $u_{j_2}$.
If $|\Sigma_i\cap\Delta|=2$, then (b) trivially holds. Assume $|\Sigma_i\cap\Delta|>2$.
Consider $u_j$ in \(\{u_j|v_j\in \Sigma_i\}\) with $j\notin\{j_1,j_2\}$. 
If $u_j\in \Sigma_{i'}$ then the map $\left((j,j_1,j_2),(j,j_1,j_2)\right)\in G^+$ stabilises each element of \(\Sigma\), a contradiction. So $u_j\notin \Sigma_{i'}$.
If $u_j$ is in the same part as $u_\ell$ where $v_\ell\in \Sigma_i$, then  $\left((j,\ell)(j_1,j_2),(j,\ell)(j_1,j_2)\right)\in G^+$ stabilises each element of \(\Sigma\), again a contradiction. Thus (b) holds.

Now assume (b) holds for both $i$ and $i'$. Thus there exist $j_1 \neq j_2$ and $m_1 \neq m_2$ such that $v_{j_1},v_{j_2}\in \Sigma_i$, $u_{j_1}$, $u_{j_2}$ are in the same part, $v_{m_1},v_{m_2}\in \Sigma_{i'}$, $u_{m_1}$, $u_{m_2}$ are in the same part.
Then $\left(({j_1},{j_2})({m_1},{m_2}),({j_1},{j_2})({m_1},{m_2})\right)\in G^+$ stabilises each element of \(\Sigma\), again a contradiction. This proves the claim.

It follows from Claim \ref{u_iatmosttwo} that there exists at most one $i \in \{1, \dots, k\}$ such that $|\Sigma_i \cap \Delta| >k$, and if there exists such an $i$, then $|\Sigma_i \cap \Delta| = k+1$. 
In particular
\[
n \leqslant k (k - 1) + (k + 1) = k^2 + 1,
\]
and thus $k \geqslant \lceil \sqrt{n - 1} \rceil$.

Now let $\ell \coloneqq \lceil \sqrt{n - 1} \rceil$. Note $\ell\geq 2$ since $n\geq 3$.
Then \(\ell-1\) is the largest integer such that \((\ell-1)^{2}<n-1\).
In particular, \(n-1=(\ell-1)^{2}+p\), where \(1 \leqslant p \leqslant 2\ell-1\).  We will show that \(k\leqslant \ell\) by displaying a partition \(\mathcal{P}\) of size \(\ell\) with trivial stabiliser \(G_{(\mathcal{P})}\). This in turn will show that \(k=\ell\).

We split the analysis into two cases: (i) \(1 \leqslant p\leqslant \ell-2\); (ii) \(\ell-1 \leqslant p \leqslant 2\ell-1\).

\emph{Case (i): \(1 \leqslant p\leqslant \ell-2\)}. \\
Consider \(\mathcal{P}:=\{P_{1},\ldots, P_{\ell}\}\) where
\begin{equation*} 
P_{i}  \coloneqq
\begin{cases}
   \mathcal{V}_n(i,\ell-1) \cup \mathcal{U}_n(i,\ell-1)& \text{if } 1\leqslant i \leqslant \ell-2\\ 
    \mathcal{V}_n(\ell-1,\ell-1) \cup \{v_{(\ell-1)^2+1}\}\cup \mathcal{U}_n(\ell-1,\ell-1)& \text{if } i= \ell-1\\ 
   \mathcal{V}_n(\ell,\ell-1)\setminus \{v_{(\ell-1)^2+1}\} & \text{if } i=\ell.
\end{cases}
\end{equation*} 

Note that $|\mathcal{V}_n(i,\ell-1)|=\ell-1$ for all $i<\ell$, $|\mathcal{V}_n(\ell,\ell-1)|=1+p\leqslant \ell-1$ and $v_{(\ell-1)^2+1}\in \mathcal{V}_n(\ell,\ell-1)$.
Thus $|P_{i}\cap\Delta|=\ell-1$ for $1\leqslant i \leqslant \ell-2$, $|P_{\ell-1}\cap\Delta|=\ell$, $|P_{\ell}\cap\Delta|=p$. Using Lemma~\ref{V is partition}(a), it easily follows that \(\mathcal{P}\) is indeed a partition of size $\ell$. 

Let \(g\in G_{(\mathcal{P})}\). 
Observe that \(P_{\ell}\subset\Delta\). This implies that \(G_{(\mathcal{P})}\leqslant G_{\Delta}= G^{+}\), and so $g=(h,h)$ for some $h\in \Alt(n)$. 
  Let $1\leqslant s\leqslant n$. Note $v_s^g=v_{s^h}$ and $v_s$ are in the same part $P_i$. In particular $ |s-s^h|<\ell-1$ if $i\neq \ell-1$ and $ |s-s^h|<\ell$ if $i=\ell-1$. On the other hand, $u_s^g=u_{s^h}$ and $u_s$ are also in the same part. Therefore $s\equiv s^h \;\pmod {\ell-1}$. It follows that $s^h=s$, unless $\{s,s^h\}=\{(\ell-2)(\ell-1)+1,(\ell-1)^2+1\}$. Since $h$ is an even permutation, it follows that $h=\mathrm{id}$, and  $g=(\mathrm{id},\mathrm{id}).$
  Therefore $G_{(\mathcal{P})}$ is trivial.   


\emph{Case (ii): \(\ell-1 \leqslant p \leqslant 2\ell-1\)}.\\
Consider \(\mathcal{P}:=\{P_{1},\ldots, P_{\ell}\}\) where
\begin{equation*} 
P_{i}  \coloneqq
\begin{cases}
   \mathcal{V}_n(i,\ell) \cup \mathcal{U}_n(i,\ell)& \text{if } 1\leqslant i \leqslant \ell-2\\ 
    \mathcal{V}_n(\ell-1,\ell) \cup \{v_n\}\cup \mathcal{U}_n(\ell,\ell)& \text{if } i= \ell-1\\ 
   \mathcal{V}_n(\ell,\ell)\setminus \{v_n\} \cup \mathcal{U}_n(\ell-1,\ell)& \text{if } i=\ell.
\end{cases}
\end{equation*} 

Note that $|\mathcal{V}_n(i,\ell)|=\ell$ for all $i\leqslant \ell-1$ since $p>\ell-2$. When $p<2\ell-1$, $|\mathcal{V}_n(\ell,\ell)|=p-(\ell-2)\leqslant \ell$ and $v_{n}\in \mathcal{V}_n(\ell,\ell)$. When $p=2\ell-1$, $n=\ell^2+1$ so that $|\mathcal{V}_n(\ell,\ell)|=\ell$ and $v_{n}\notin \mathcal{V}_n(i,\ell)$ for any $1\leq i\leq \ell$. 
Thus $|P_{i}\cap\Delta|=\ell$ for $1\leqslant i \leqslant \ell-2$, $|P_{\ell-1}\cap\Delta|=\ell+1$.
 When $p<2\ell-1$, $0\leqslant |P_{\ell}\cap\Delta|=p+1-\ell\leqslant\ell-1$.
 When $p=2\ell-1$, $|P_{\ell}\cap\Delta|=\ell$.
 Using Lemma~\ref{V is partition}(a), it easily follows that \(\mathcal{P}\) is indeed a partition of size $\ell$. 

Let \(g\in G_{(\mathcal{P})}\). 
Observe that $|P_{\ell-1}\cap\Delta'|=|\mathcal{U}_n(\ell,\ell)|$. Since $n\leqslant \ell^2+1$ and $\ell\geqslant 2$, we see that   $|P_{\ell-1}\cap\Delta'|\leqslant \ell$ so $|P_{\ell-1}\cap\Delta|\neq |P_{\ell-1}\cap\Delta'|$. This implies that \(G_{(\mathcal{P})}\leqslant G_{P_{\ell-1}}\leqslant G^{+}\), and so $g=(h,h)$ for some $h\in \Alt(n)$. 
  Let $1\leqslant s\leqslant n$. Note $v_s^g=v_{s^h}$ and $v_s$ are in the same part $P_i$. In particular $ |s-s^h|<\ell$ if $i\neq \ell-1$. On the other hand, $u_s^g=u_{s^h}$ and $u_s$ are also in the same part. Therefore $s\equiv s^h \;\pmod {\ell}$. It follows that $s^h=s$, unless $i=\ell-1$ and $\{s,s^h\}=\{n,t\}$ where $t$ is the unique element in  $\mathcal{V}_n(\ell-1,\ell)$ such that  $t\equiv n \;\pmod {\ell}$ . Since $h$ is an even permutation, it follows that $h=\mathrm{id}$, and  $g=(\mathrm{id},\mathrm{id}).$
  Therefore $G_{(\mathcal{P})}$ is trivial.  
%
%
%
%
%
Hence \(D(G)=\ell\), proving the result.
\end{proof}

\section{Proofs of the main theorems}

Theorem~\ref{intro thm: dno of crown} follows directly from Theorem~\ref{intro thm: table values}. Theorem~\ref{intro thm: table values} itself follows from 
Proposition \ref{prop:notfaithful}, 
Proposition \ref{prop:DGdiag-outer}, 
Proposition \ref{prop:n=4}, 
Proposition \ref{prop:DGdiag-Sym}, and 
Proposition \ref{prop:DGdiag-Alt}. 
Finally, Theorem~\ref{main} follows  from 
Proposition \ref{group-cases}.








 \printbibliography[heading=bibintoc,title={Bibliography}]
\end{document}